\DeclareFontFamily{U}{tipa}{}
\DeclareFontShape{U}{tipa}{bx}{n}{<->tipabx10}{}
\newcommand{\arc@char}{{\usefont{U}{tipa}{bx}{n}\symbol{62}}}%
\newcommand{\arc}[1]{\mathpalette\arc@arc{#1}}
\newcommand{\arc@arc}[2]{%
  \sbox0{$\m@th#1#2$}%
  \vbox{
    \hbox{\resizebox{\wd0}{\height}{\arc@char}}
    \nointerlineskip
    \box0
  }%
}
\renewcommand{\thesubfigure}{\thefigure.\arabic{subfigure}}
\renewcommand{\p@subfigure}{}
\renewcommand{\@thesubfigure}{\thesubfigure:\hskip\subfiglabelskip}
\newcommand{\doublewedge}{\big@doubleop{\wedge}}
\newcommand{\big@doubleop}[1]{%
  \DOTSB\mathop{\mathpalette\big@doubleop@aux{#1}}\slimits@
}
\newcommand\big@doubleop@aux[2]{%
  \sbox\z@{$\m@th#1#2$}%
  \makebox[1.35\wd\z@][s]{$\m@th#1#2\hss#2$}%
}
\newcommand{\norm}[1]{\left\|#1\right\|}  
\newcommand{\fil}{\mbox{fil}}
\newcommand{\Nrv}{\mbox{Nrv}}
\newcommand{\sk}{\mbox{sk}}
\newcommand{\skCycNrv}{\mbox{sk}_{\mbox{\tiny cyclic}}\mbox{Nrv}}
\newcommand{\skCyc}{\mbox{sk}_{\mbox{\tiny cyclic}}}
\newtheorem{example}{Example}
\newtheorem{lemma}{Lemma}
\newtheorem{theorem}{Theorem}
\newtheorem{observation}{Observation}
\definecolor{light}{gray}{0.80}
\begin{document}

\title[Ghrist Barcoded Video Frames]{Ghrist Barcoded Video Frames.\\ Application in Detecting Persistent Visual Scene\\ 
Surface Shapes captured in Videos}

\author[Arjuna P.H. Don]{Arjuna P.H. Don}
\address{
	Computational Intelligence Laboratory,
	University of Manitoba, WPG, MB, R3T 5V6, Canada}
\email{pilippua@myumanitoba.ca}
	
\author[James F. Peters]{James F. Peters}
\address{
Computational Intelligence Laboratory,
University of Manitoba, WPG, MB, R3T 5V6, Canada and
Department of Mathematics, Faculty of Arts and Sciences, Ad\.{i}yaman University, 02040 
Ad\.{i}yaman, Turkey}
\thanks{The research has been supported by the Natural Sciences \&
Engineering Research Council of Canada (NSERC) discovery grant 185986, Instituto Nazionale di Alta Matematica (INdAM) Francesco Severi, Gruppo Nazionale  per le Strutture Algebriche, Geometriche e Loro Applicazioni grant 9 920160 000362, n.prot U 2016/000036 and Scientific and Technological Research Council of Turkey (T\"{U}B\.{I}TAK) Scientific Human
Resources Development (BIDEB) under grant no: 2221-1059B211301223.}
\email{James.Peters3@umanitoba.ca}

\dedicatory{Dedicated to  Enrico Betti and Paul Alexandroff}

\subjclass[2010]{55N99 (Persistent Homology); 68U05 (Computational Geometry), 60G55 (Point Processes)}

\date{}

\begin{abstract}
This article introduces an application of Ghrist barcodes in the study of persistent Betti numbers derived from vortex nerve complexes found in triangulations of video frames.   A Ghrist barcode is a topology of data pictograph useful in representing the persistence of the features of changing shapes. The basic approach is to introduce a free Abelian group representation of intersecting filled polygons on the barycenters of the triangles of Alexandroff nerves.   An Alexandroff nerve is a maximal collection of triangles with a common vertex in the triangulation of a finite, bounded planar region.   In our case, the planar region is a video frame.   A Betti number is a count of the number of generators in a finite Abelian group.   The focus here is on the persistent Betti numbers across sequences of triangulated video frames.   Each Betti number is mapped to an entry in a Ghrist barcode.   Two main results are given, namely, vortex nerves are Edelsbrunner-Harer nerve complexes and the Betti number of a vortex nerve equals $k+2$ for a vortex nerve containing $k$ edges attached between a pair of vortex cycles in the nerve. 
\end{abstract}
\keywords{Betti Number, Ghrist barcode,  Hole, Topology of Data, Vortex Nerve, Video Frame Shape}

\maketitle

\section{Introduction}
This paper introduces an application of Ghrist barcodes in classifying and identifying persistent video frame shapes.   This approach to video frame shape detection and analysis provides a foundation for machine learning in the study of persistent video frame shapes and a vidoe-shrinking approach to solving the big data problem relative to videos containing thousands of redundant frames. 

A \emph{\bf Ghrist barcode}, usually called a \emph{\bf persistence barcode}, is a topology-of-data pictograph that represents that appearance and disappearance of consecutive sequences of video frames having a particular feature value~\cite{Ghrist2008BAMSbarcodePersistence},~\cite[\S 5.13, pp. 104-106]{Ghrist2014elementaryAppliedGeometry}.   The origin of topology-of-data barcodes can be traced back to H. Edelsbrunner, D. Letscher and A. Zomorodian~\cite{EdelsbrunnerLetscherZomorodian2000IEEEbarcode},~\cite{EdelsbrunnerLetscherZomorodian2001DCGbarcode}.    For a complete view of the landscape for a topology-of-data barcode viewed as a multiset of intervals\footnote{Many thanks to Vidit Nanda for pointing this out.}, see J.A. Perea~\cite{Perea2018arXivhomologyBarcode}. In this paper, the Betti number of a triangulated video frame vortex nerve is represented by consecutive sequences of frames with holes (gaps) between them.    A \emph{vortex nerve} is a collection of nesting, possibly overlapping filled vortexes attached to each other and have nonempty intersection~\cite{AhmadPeters2018centroidalVortices,PetersRamanna2018shapeDescriptions,Peters2018JMSMvortexNerves,Peters2018AlMSproximalPlanarShapes,Peters2017AMSJshapeSignature}.  A \emph{filled vortex} has a boundary that is a simple closed curve and a nonempty interior.   

Each of the vertices on the edges of a vortex nerve is a barycenter (intersection of triangle median lines) in a triangle in an Alexandroff nerve, introduced by P. Alexandroff [Aleksandrov]~\cite[\S 31, p. 39]{Alexandroff1932elementaryConcepts},~\cite{Alexandroff1926MAnnNerfTheorem} and elaborated in~\cite[Vol. 3, p. 67]{Alexandroff1956combinatorialTopology},~\cite[\S 2.11, pp. 160-161]{AlexandroffHopf1935Topologie}.   Briefly, an \emph{Alexandroff nerve} is a collection of triangles with a common vertex.   Each vortex in a nerve has a corresponding cyclic group with a generating element and a Betti number equal to 1.  The \emph{Betti number} of a vortex nerve is a count of the number of generating elements in the nerve.    A sample Ghrist barcode that records the multiple occurrences of video frames containing a vortex nerve with Betti number equal to 8 is shown in Fig.~\ref{fig:bett2}.    Each row of the video Ghrist barcode in Fig.~\ref{fig:bett2} represents a sequence of frames containing vortex nerves with the same Betti number. 

Vortex nerves in triangulated video frames are important in this work, since the edges and interior each vortex nerve reveal paths of reflected light between video frame dark regions (image holes).   Because of their very simple structure, it is a straightforward task to compute the Betti number of a vortex nerve and an equally straightforward task to derive a Betti-number based barcode for each triangulated video.   For a particular Betti number, it is common to find sequences of video frames separated by gaps (frames with a different Betti number) in an entire video.    

This observation leads to the production of focused (reduced) videos containing only video frames with a particular video number.
Here, the basic approach, inspired by B. Le, H. Nguyen and D. Tran~\cite{LeNguyenTran2014videoFrameWatermarking},  
is to shrink a video so that only frames with a recurrent vortex nerve Betti number of interest are retained in a video.   Video shrinking ({\em i.e.}, video frame elimination) is achieved by deleting those frames represented by gaps between occurrences of frames that yield consecutive sequences of a particular vortex nerve Betti number.   

The end result of this approach is easy as well quick access to triangulated video frames that represent a particular Betti number.   The beneficial side-effect of reduced videos is the focus on the minute changes in persistent surface shapes recorded in sequences of frames with a single Betti number orientation.
%
\begin{figure}[!ht]
	\centering	
%
\psscalebox{0.45 0.45} 
{
	\begin{pspicture}(0,-7.2380953)(27.828571,7.2380953)
	\rput(18,-3.5){\includegraphics[width=120mm]{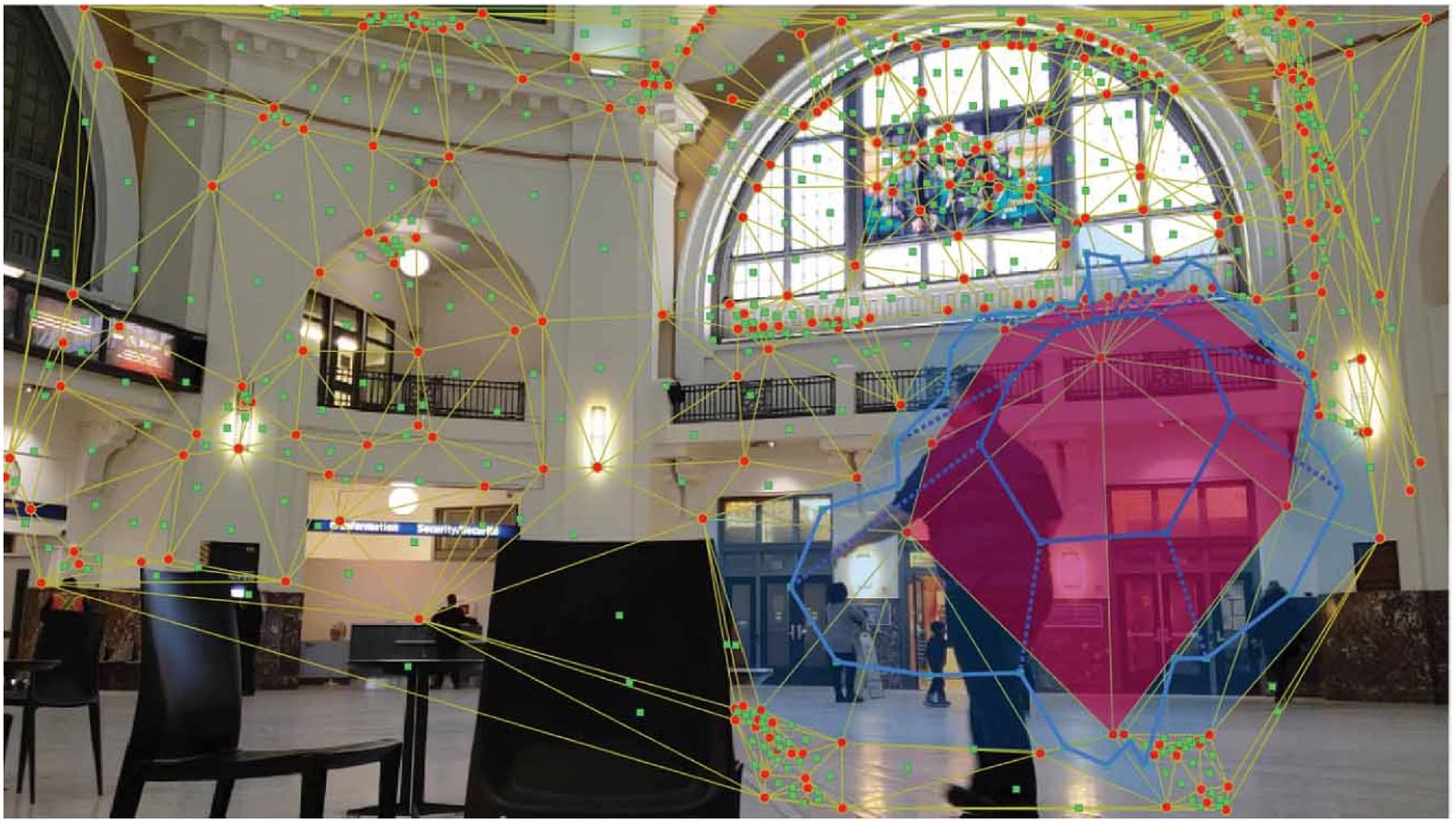}}
	\rput(13,4.2){\includegraphics[width=250mm]{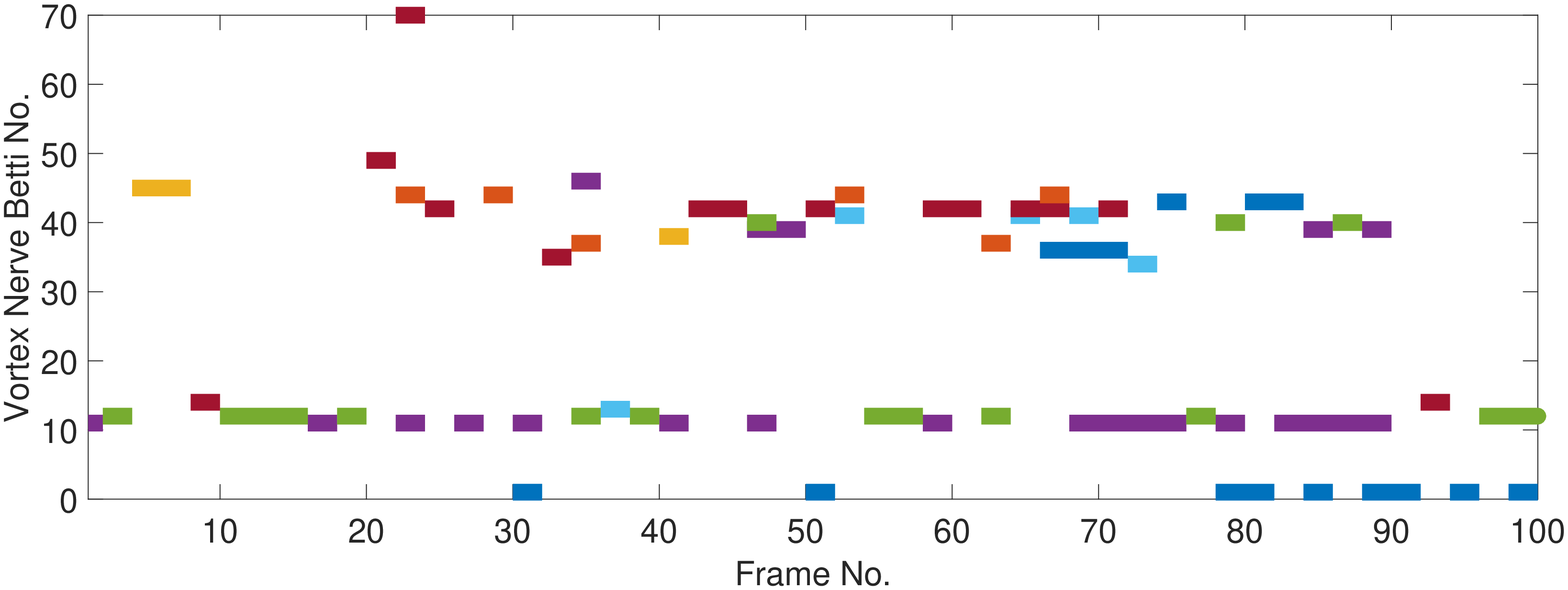}}
	\pspolygon[linecolor=black, 
	linewidth=0.04](4.120086,-2.2731867)(4.120086,-4.0509644)(5.720086,-4.7620754)(7.320086,-4.0509644)(7.320086,-2.2731867)(5.720086,-1.5620756)(5.720086,-1.5620756)(5.720086,-1.5620756)
	\pspolygon[linecolor=black, 
	linewidth=0.04](2.6197853,-1.3809191)(3.8733225,0.0040802136)(5.7051888,0.3601787)(7.3776126,-0.08394513)(8.850173,-1.3180654)(9.258842,-3.1025758)(8.818921,-4.7564554)(7.5653834,-6.1414547)(5.7185273,-6.556803)(4.081527,-6.142655)(2.6189597,-4.869035)(2.2252798,-3.0252748)(2.215063,-2.9806619)
	\psline[linecolor=black, linewidth=0.04](7.3073173,-2.2666667)(8.848781,-1.300813)
	\psline[linecolor=black, linewidth=0.04](7.3170733,-4.022764)(8.809756,-4.744715)
	\psline[linecolor=black, linewidth=0.04](5.717073,-4.744715)(5.736585,-6.5398374)
	\psline[linecolor=black, linewidth=0.04](4.107317,-4.0325203)(2.6243901,-4.8422766)
	\psline[linecolor=black, linewidth=0.04](4.126829,-2.2569106)(2.604878,-1.3788618)
	\psline[linecolor=black, linewidth=0.04](5.717073,-1.5447154)(5.707317,0.37723577)
	\psdots[linecolor=black, fillstyle=solid,fillcolor=green, dotstyle=o, dotsize=0.14, 
	fillcolor=green](4.129976,-2.2736974)
	\rput[bl](4.2002754,-2.6077535){5a}
	\rput[bl](1.9613674,-1.3288167){10b}
	\rput[bl](1.625636,-3.1517243){9b}
	\rput[bl](6.705263,-3.9461427){2a}
	\psdots[linecolor=black, fillstyle=solid,fillcolor=blue, dotstyle=o, dotsize=0.14, 
	fillcolor=blue](8.833881,-1.3191009)
	\psdots[linecolor=black, fillstyle=solid,fillcolor=green, dotstyle=o, dotsize=0.14, 
	fillcolor=green](7.3183813,-4.036016)
	\psdots[linecolor=black, fillstyle=solid,fillcolor=green, dotstyle=o, dotsize=0.14, 
	fillcolor=green](5.72128,-4.751958)
	\psdots[linecolor=black, fillstyle=solid,fillcolor=green, dotstyle=o, dotsize=0.14, 
	fillcolor=green](4.1183815,-4.036016)
	\psdots[linecolor=black, fillstyle=solid,fillcolor=green, dotstyle=o, dotsize=0.14, 
	fillcolor=green](7.3096857,-2.2765958)
	\psdots[linecolor=black, fillstyle=solid,fillcolor=green, dotstyle=o, dotsize=0.14, 
	fillcolor=green](5.7125845,-1.5780451)
	\psdots[linecolor=black, fillstyle=solid,fillcolor=blue, dotstyle=o, dotsize=0.14, 
	fillcolor=blue](5.7106237,0.35528824)
	\psdots[linecolor=black, fillstyle=solid,fillcolor=blue, dotstyle=o, dotsize=0.14, 
	fillcolor=blue](3.8929765,0.008229426)
	\psdots[linecolor=black, fillstyle=solid,fillcolor=blue, dotstyle=o, dotsize=0.14, 
	fillcolor=blue](9.257683,-3.0917706)
	\psdots[linecolor=black, fillstyle=solid,fillcolor=blue, dotstyle=o, dotsize=0.14, 
	fillcolor=blue](8.804741,-4.744712)
	\psdots[linecolor=black, fillstyle=solid,fillcolor=blue, dotstyle=o, dotsize=0.14, 
	fillcolor=blue](7.5518003,-6.1270647)
	\psdots[linecolor=black, fillstyle=solid,fillcolor=blue, dotstyle=o, dotsize=0.14, 
	fillcolor=blue](5.7341533,-6.5447116)
	\psdots[linecolor=black, fillstyle=solid,fillcolor=blue, dotstyle=o, dotsize=0.14, 
	fillcolor=blue](2.6282706,-4.850594)
	\psdots[linecolor=black, fillstyle=solid,fillcolor=blue, dotstyle=o, dotsize=0.14, 
	fillcolor=blue](4.069447,-6.138829)
	\psdots[linecolor=black, fillstyle=solid,fillcolor=blue, dotstyle=o, dotsize=0.14, 
	fillcolor=blue](7.3659177,-0.0908656)
	\psdots[linecolor=black, fillstyle=solid,fillcolor=blue, dotstyle=o, dotsize=0.14, 
	fillcolor=blue](2.2223883,-2.9858882)
	\psdots[linecolor=black, fillstyle=solid,fillcolor=blue, dotstyle=o, dotsize=0.14, 
	fillcolor=blue](2.6282706,-1.3917706)
	\rput[bl](5.561374,-1.9721452){0a}
	\rput[bl](6.744007,-2.5578594){1a}
	\rput[bl](5.5378447,-4.6427336){3a}
	\rput[bl](4.2143154,-4.054498){4a}
	\rput[bl](5.490786,0.46146828){0b}
	\rput[bl](7.337845,0.045501888){1b}
	\rput[bl](8.937844,-1.319204){2b}
	\rput[bl](9.367256,-3.2074392){3b}
	\rput[bl](2.1966681,-5.1250863){8b}
	\rput[bl](3.7496095,-6.48391){7b}
	\rput[bl](5.579021,-6.901557){6b}
	\rput[bl](7.561374,-6.4486156){5b}
	\rput[bl](8.884904,-4.972145){4b}
	\rput[bl](3.2672565,0.12785482){11b}
	\rput[bl](4.6241193,-0.109960295){$\boldsymbol{\left\langle b\right\rangle}$}
	\rput[bl](7.8907857,-2.1194842){$\boldsymbol{\left\langle e_1\right\rangle}$}
	\rput[bl](5.7479286,-0.8147222){$\boldsymbol{\left\langle e_6\right\rangle}$}
	\rput[bl](3.2241192,-1.8051984){$\boldsymbol{\left\langle e_5\right\rangle}$}
	\rput[bl](2.986024,-4.2813888){$\boldsymbol{\left\langle e_4\right\rangle}$}
	\rput[bl](4.8907857,-5.7385316){$\boldsymbol{\left\langle e_3\right\rangle}$}
	\rput[bl](7.5574527,-4.7385316){$\boldsymbol{\left\langle e_2\right\rangle}$}
	\rput[bl](4.690786,-2.224246){$\boldsymbol{\left\langle a\right\rangle}$}
	\psline[linecolor=red, linewidth=0.04, arrowsize=0.05291667cm 
	2.0,arrowlength=1.4,arrowinset=0.0]{->}(4.857143,0.43809524)(3.8190476,2.1904762)
	\psline[linecolor=red, linewidth=0.04, arrowsize=0.05291667cm 
	2.0,arrowlength=1.4,arrowinset=0.0]{->}(11.866667,-3)(10.095238,-3)
	\end{pspicture}
}
	\caption[]{Video barcode entry derived from a triangulated frame with vortex nerve 
	Betti no. = 8}
	\label{fig:bett2}
\end{figure}

\section{Preliminaries}
This section briefly introduces the methods used to derive vortex nerves and their Betti numbers as well as the method used to construct the Ghrist barcode for a video.   The sample barcode in Fig.~\ref{fig:bett2} is the result of the following steps leading from the detection of a vortex nerve in each triangulated video frame to the Betti number representing the video frame, {\em i.e.},\\
$\mbox{}$\\
\fbox{\bf Steps leading to a Ghrist barcode entry for video frame Betti number:}\\
\begin{compactenum}[{\bf Step} 1]
\item {\bf Select a video $v$}, a sequence of frames $fr[1],\dots,\boldsymbol{fr[i]},\dots,fr[n]$. 
\item {\bf Select $i^{th}$ frame} $fr[i ]$ in video $v$.
\item {\bf Frame $fr[i ]$ frame} maps to a set of centroids $S$ on image holes (black regions in
a binary image).
\item {\bf Frame Centroids $S$} map to a set of nonoverlapping triangles $\left\{\bigtriangleup\right\}$.
\item {\bf Triangles $\left\{\bigtriangleup\right\}$} map to a set of barycenters $B$.
\item {\bf Barycenters $B$} map to a vortex nerve \colorbox{green}{$\skCycNrv E$}.
\item {\bf Vortex nerve $\skCycNrv E$} maps to Betti number $\mathcal{B}(\skCycNrv E)$, which is a count of the number
of overlapping vortexes plus the number of edges (called cusp filaments) connected between the vortexes.  
\item {\bf Betti number $\boldsymbol{\mathcal{B}(\skCycNrv E)}$} maps to an entry (tiny bar) in a row of a video Ghrist barcode.
\end{compactenum}

\begin{figure}[!ht]
\centering	
\begin{pspicture}
(-1.0,1.0)(8.0,1.5)
 \rput(-1.2, 1.0){$fr [i ]$ }
 \psTextFrame[](0.0,0.8)(0.8,1.3){$S$}
 \psline[linecolor=black, linewidth=0.04, arrowsize=0.05291667cm 
			2.0,arrowlength=1.4,arrowinset=0.0]{->}(-0.8,1.0)(-0.2,1.0)
 \psTextFrame[](1.5,0.8)(2.2,1.3){$\left\{\bigtriangleup\right\}$}
 \psline[linecolor=black, linewidth=0.04, arrowsize=0.05291667cm 
			2.0,arrowlength=1.4,arrowinset=0.0]{->}(0.9,1.0)(1.45,1.0)
 \psTextFrame[](3.0,0.8)(3.8,1.3){$B$}
 \psline[linecolor=black, linewidth=0.04, arrowsize=0.05291667cm 
			2.0,arrowlength=1.4,arrowinset=0.0]{->}(2.3,1.0)(2.85,1.0)
 \psTextFrame*[linecolor=green!20](4.5,0.8)(6.2,1.3){\footnotesize $\boldsymbol{\skCycNrv E}$}
 \psline[linecolor=black, linewidth=0.04, arrowsize=0.05291667cm 
			2.0,arrowlength=1.4,arrowinset=0.0]{->}(3.9,1.0)(4.45,1.0)
 \psframe[](4.5,0.8)(6.2,1.3)
 \rput(7.8, 1.0){\footnotesize $\boldsymbol{\mathcal{B}(\skCycNrv E)}$}
 \psline[linecolor=black, linewidth=0.04, arrowsize=0.09291667cm 
			2.1,arrowlength=1.8,arrowinset=0.0]{->}(6.3,1.0)(6.75,1.0)
\end{pspicture}
\caption[]{Stages from video frame $fr [i ]$ to Betti no. $\mathcal{B}(\skCycNrv E)$ of vortex nerve $\skCycNrv E$}
\label{fig:nerveBettiNumber}
\end{figure}

The mappings in steps 3 to 7 are shown in Fig.~\ref{fig:nerveBettiNumber}.    Each mapping in these steps establishes
a correspondence between an object such as the set of centroids $S$ 

\begin{example}{\bf Mapping of a Frame Betti number to a Video Ghrist Barcode Entry}.\\
An illustration of the steps leading to a video Ghrist barcode entry is partially represented in Fig.~\ref{fig:bett2}.   In this case, a vortex nerve with Betti number equal to 8 is derived from a sample video frame.   In Fig.~\ref{fig:bett2},
the barycenters of the triangles on the sample video frame are mapped to a vortex nerve (step 6) \colorbox{green}{$\skCycNrv E$}.  The Betti number of the nerve $\skCycNrv E$ is mapped to a tiny bar entry in a row of the Ghrist barcode (step 8).
\qquad \textcolor{blue}{\Squaresteel}
\end{example}

\subsection{Holes, centroids, triangulation, barycenters and vortexes}$\mbox{}$\\
A \emph{hole} is a surface retion that absorbs light.   In terms of optical sensors that record the intensity and colour of light reflected from visual scene surfaces, near zero intensities correspond to dark (hole) surface regions.   In the morphology of binary images, each centroid is the center of mass of a dark image region that corresponds to a visual scene surface hole.   Let $X$ be a set of points in a $n\times m$ rectangular 2D region containing points with coordinates $\left(x_i,y_i\right),i=1,\dots,n$ in the Euclidean plane.   Then, for example, the coordinates $x_c,y_c$ of a centroid in an $n\times m$  2D rectangular are
\[
x_c = \frac{1}{n}\mathop{\sum}\limits_{i=1}^{n} x_i, y_c = \frac{1}{m}\mathop{\sum}\limits_{i=1}^{m} y_i.
\]

\begin{algorithm}[!ht]
\caption{\bf  Filled Delaunay Triangle Construction}\label{alg:DelaunayTriangleConstruction}

\SetKwData{Left}{left}
\SetKwData{This}{this}
\SetKwData{Up}{up}
\SetKwFunction{Union}{Union}
\SetKwFunction{FindCompress}{FindCompress}
\SetKwInOut{Input}{Input}
\SetKwInOut{Output}{Output}
\SetKwComment{tcc}{/*}{*/}

\Input{Set of centroids $S$ on the holes (dark regions) on video frame}
\Output{Delaunay Triangle Construction}
\emph{Let $p$ be a centroid in $S$}\;
\emph{{\bf Triangle Vertexes Selection Step}; Select centroids $q,r\in S$ nearest $p\in S$}\;
\emph{Draw edge $\arc{pq}$ on a closed half plane $\pi_{pq}$ that covers $r\in S$}\;
\emph{Draw edge $\arc{pr}$ on a closed half plane $\pi_{pr}$ that covers $q\in S$}\;
\emph{Draw edge $\arc{qr}$ on a closed half plane $\pi_{qr}$ that covers $p\in S$}\;
\emph{Edges on triangle $\bigtriangleup(pqr)$ are on intersecting half planes covering $\bigtriangleup(pqr)$}\;
/* $\bigtriangleup(pqr)$ is a filled Delaunay triangle */ \;
\end{algorithm}

A centroid is also called a seed point, which is used as a vertex in the triangulation of a digital image.   By selecting a set of seed points that are image centroids, it is then possible to construct a collection of what are known as Delaunay triangles on the image.   Let $p,q,r$ be three neighbouring image centroids is a set of seed points $S$ and let $\pi_{pq},\pi_{pr},\pi_{qr}$ be three half planes.   Then Alg.~\ref{alg:DelaunayTriangleConstruction} is used to construct a filled Delaunay triangle.

\begin{figure}[!ht]
\centering
\begin{pspicture}
(-0.5,-1.2)(4,0)
\psline*[linecolor=gray!20](0,-1)(2,0)(4,-1) 
\psline[linewidth=0.5pt](0,-1)(2,0) 
\psline[linewidth=0.5pt](2,0)(4,-1) 
\psline[linewidth=0.5pt](4,-1)(0,-1) 
\psline[linecolor=blue, linewidth=0.5pt](4,-1)(1,-0.5) 
\psline[linecolor=blue, linewidth=0.5pt](2,0)(2,-1) 
\psline[linecolor=blue, linewidth=0.5pt](0,-1)(3,-0.5) 
\psdots[dotstyle=o, linewidth=0.5pt,linecolor = black, fillcolor = red]
(0,-1)(2,0)(4,-1)
\psdots[dotstyle=o, linewidth=0.5pt,linecolor = black, fillcolor = yellow]
(1,-0.5)(2,-1)(3,-0.5) 
\psdots[dotstyle=o, linewidth=0.5pt,linecolor = black, fillcolor = green]
(2,-0.67) 
\rput(-0.2,-1){$\boldsymbol{p}$}\rput(4.2,-1){$\boldsymbol{q}$}
\rput(2.0,0.2){$\boldsymbol{r}$}
\rput(2.2,-0.4){$\boldsymbol{b}$}
\end{pspicture}
\caption[]{Filled Delaunay triangle $\boldsymbol{\bigtriangleup(pqr)}$ barycenter $\boldsymbol{b}$}
\label{fig:barycenter}
\end{figure}

The intersection of the median lines of a filled Delaunay triangle is \emph{barycenter} of the triangle.

\begin{example}{\bf Sample Barycenters of Filled Dalaunay Triangles}.\\
Let $\boldsymbol{\bigtriangleup(pqr)}$ be a filled Delaunay triangle as shown in  Fig.~\ref{fig:barycenter}.
The \emph{median line} of a triangle is the line drawn from a triangle vertex to the midpoint of the side opposite the vertex.   Three median lines are also shown in Fig.~\ref{fig:barycenter}.   The barycenter of $\boldsymbol{\bigtriangleup(pqr)}$ is located at the intersection of the median lines (shown with a green \textcolor{green}{$\boldsymbol{\bullet}$} in Fig.~\ref{fig:barycenter}).   Many other examples of Delaunay triangle barycenters are shown in Fig.~\ref{fig:vortexNerveFromArjuna}.
\qquad \textcolor{blue}{\Squaresteel}
\end{example}

The important thing to notice here is that an image barycenter on a centroidal triangle is in an image region between the dark regions (holes) in a visual scene.    In other words, the barycenter of a centroidal triangle like  $\boldsymbol{\bigtriangleup(pqr)}$ in Fig.~\ref{fig:barycenter} originates from a surface shape that reflects or refracts light bombarding the surface. 

\begin{figure}[!ht]
\centering
\begin{pspicture}
(-0.5,-1.2)(4.5,1.03)
\psline*[linecolor=gray!20](0,-1)(2,0)(4,-1) 
\psline[linewidth=0.5pt](0,-1)(2,0) 
\psline[linewidth=0.5pt](2,0)(4,-1) 
\psline[linewidth=0.5pt](4,-1)(0,-1) 
\psline*[linecolor=gray!20](4,0)(4,1)(2,0) 
\psline*[linecolor=gray!20](4,-1)(4,0)(2,0) 
\psline*[linecolor=gray!20](2,0)(4,1)(0,1) 
\psline*[linecolor=gray!20](2,0)(0,1)(0,0) 
\psline*[linecolor=gray!20](2,0)(0,0)(0,-1) 
\psline[linewidth=0.5pt](2,0)(0,-1)\psline[linewidth=0.5pt](0,0)(0,-1) 
\psline[linewidth=0.5pt](0,1)(4,1)\psline[linewidth=0.5pt](2,0)(0,1) 
\psline[linewidth=0.5pt](0,0)(0,1)\psline[linewidth=0.5pt](0,0)(2,0) 
\psline[linewidth=0.5pt](2,0)(4,0) 
\psline[linewidth=0.5pt](2,0)(4,-1) 
\psline[linewidth=0.5pt](4,-1)(4,0) 
\psline[linewidth=0.5pt,linecolor=blue](2,-0.67)(3.35,-0.30) 
\psline[linewidth=0.5pt,linecolor=blue](3.35,-0.30)(3.35,0.30) 
\psline[linewidth=0.5pt,linecolor=blue](3.35,0.30)(2,0.67) 
\psline[linewidth=0.5pt,linecolor=blue](2,0.67)(0.65,0.30) 
\psline[linewidth=0.5pt,linecolor=blue](0.65,0.30)(0.65,-0.30) 
\psline[linewidth=0.5pt,linecolor=blue](0.65,-0.30)(2,-0.67) 
\rput(2.0,0.87){\footnotesize $\boldsymbol{\skCyc E}$}
\rput(2.0,0.47){\footnotesize $\boldsymbol{\left\langle a\right\rangle}$}
\psline[linewidth=0.5pt](4,0)(4,1)\psline[linewidth=0.5pt](4,1)(2,0) 
\psdots[dotstyle=o, linewidth=0.5pt,linecolor = black, fillcolor = green]
(2,0.67) 
\psdots[dotstyle=o, linewidth=0.5pt,linecolor = black, fillcolor = green]
(3.35,-0.30) 
\psdots[dotstyle=o, linewidth=0.5pt,linecolor = black, fillcolor = green]
(3.35,0.30) 
\psdots[dotstyle=o, linewidth=0.5pt,linecolor = black, fillcolor = green]
(0.65,0.30) 
\psdots[dotstyle=o, linewidth=0.5pt,linecolor = black, fillcolor = green]
(0.65,-0.30) 
\psdots[dotstyle=o, linewidth=0.5pt,linecolor = black, fillcolor = red]
(0,-1)(2,0)(4,-1)(0,0)(0,1)(4,0)(4,1)
\psdots[dotstyle=o, linewidth=0.5pt,linecolor = black, fillcolor = green]
(2,-0.67) 
\end{pspicture}
\caption[]{Barycentric vortex cycle $\boldsymbol{\skCyc E}$ with generator $\boldsymbol{\left\langle a\right\rangle}$ on an Alexandroff nerve}
\label{fig:vortex}
\end{figure}$\mbox{}$\\

\noindent \fbox{\bf Alexandroff Nerve Triangles\ $\boldsymbol{\longmapsto}$\ Barycentric Vortex Cycle:}\\
The next step is to select an Alexandroff nerve $\Nrv E$ ({\em  i.e.}, collection of triangles with a common vertex).   Next, locate the barycenters of the triangles in $\Nrv E$.   Then draw edges of half planes between neighbouring barycenters on nerve $\Nrv E$.   Each barycentric half plane covers the nucleus (common vertex) of  $\Nrv E$.   The end result is a filled \emph{vortex cycle} $E$ (denoted by $\skCyc E$) that captures the reflected light from visual scene surface shapes.   Every vortex cycle has a generator $a$ (denoted by $\left\langle a\right\rangle$), where $a$ is the vertex of an edge in the path between vertex $a$ and any other vertex in the cycle.   A simplifying assumption made here is that every sequence of edges is a multiple of the edge that begins with vertex $a$.    For more about generators, see Section~\ref{sec:generator}.   

\begin{example}{\bf Sample Barycentric Vortex Cycle on an Alexandroff Nerve}.\\
Let $\boldsymbol{\skCyc E}$ be a vortex cycle drawn on the barycenters of a filled Delaunay triangle as shown in  Fig.~\ref{fig:vortex}.
Many other examples of barycentric vortex cycles are shown in Fig.~\ref{fig:vortexNerveFromArjuna}.
\qquad \textcolor{blue}{\Squaresteel}
\end{example}

H. Edelsbrunner and J.L Harer~\cite[\S III.2, p. 59]{Edelsbrunner1999} that, in general, a \emph{nerve} complex is a collection of sets that have nonempty intersection.\\
\vspace{3mm}

\noindent \fbox{\bf Construction of a Barycentric Vortex Nerve:}\\
For the triangles bordering an Alexandroff nerve $\Nrv E$, construct a filled vortex cycle $\skCyc E'$ on the barycenters of the triangles.   Each edge of $\skCyc E'$ is on a half plane that covers the common vertex of the Alexandroff nerve $\Nrv E$.

\begin{lemma}\label{lemma:nestingVortexes}
A collection of nesting, overlapping filled vortexes is an Edelsbrunner-Harer nerve complex.
\end{lemma}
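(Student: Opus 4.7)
The plan is to verify the Edelsbrunner--Harer nerve condition directly from the definition given just before the lemma: a nerve complex is a collection of sets with nonempty intersection. So my target is to exhibit, for any collection $\mathcal{V} = \{V_1,\dots,V_k\}$ of nesting, overlapping filled vortexes, a point (or region) lying in $\bigcap_i V_i$.

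First I would unpack the two hypotheses on $\mathcal{V}$. Each $V_i$ is a filled vortex, so by the definition recalled earlier in the paper, $V_i$ has a simple closed curve as boundary and a nonempty interior $\mathrm{int}(V_i)$. The term \emph{nesting} I would read in the paper's own sense (consistent with the construction of barycentric vortex cycles on triangles bordering the same Alexandroff nerve $\mathrm{Nrv}\,E$): the vortexes are arranged so that, up to relabeling, $V_1 \supseteq V_2 \supseteq \cdots \supseteq V_k$, or at least that every pair $V_i, V_j$ is comparable under inclusion of their closures. The term \emph{overlapping} I would read as: for each adjacent pair in the nesting, the intersection of the filled regions is not merely a curve but contains interior points of both vortexes.

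With this set-up, the key step is routine: under nesting the $k$-fold intersection collapses to the innermost vortex, $\bigcap_{i=1}^{k} V_i = V_k$, and since $V_k$ is filled it has nonempty interior and hence is nonempty. If instead one prefers to use only pairwise overlapping without strict containment, I would argue by a short induction on $k$: for $k=2$, overlapping gives $V_1 \cap V_2 \neq \varnothing$ with interior points in both; for the inductive step, the nested/overlapping hypothesis guarantees that $V_{k+1}$ meets the accumulated intersection $\bigcap_{i\le k} V_i$ in a region that still contains interior points shared with the innermost previously attached vortex. Either way the Edelsbrunner--Harer condition $\bigcap_i V_i \neq \varnothing$ is met, and $\mathcal{V}$ is a nerve complex in the sense of \cite[\S III.2, p. 59]{Edelsbrunner1999}.

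I expect the main obstacle to be not the topology but the precise interpretation of \emph{nesting} and \emph{overlapping} as used in this paper. These are informal descriptive terms, and the lemma is trivial or false depending on how they are formalized. The cleanest plan is therefore to state at the outset the reading I am adopting (comparable under inclusion, with each adjacent pair sharing interior points, as exhibited in Fig.~\ref{fig:vortex} and Fig.~\ref{fig:bett2}), and then observe that this reading immediately yields a nonempty common intersection, which is exactly what the Edelsbrunner--Harer definition demands.
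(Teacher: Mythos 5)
Your proposal is correct and follows essentially the same route as the paper's proof: verify the Edelsbrunner--Harer condition directly by noting that nesting of filled vortexes forces the common intersection to contain the innermost (filled, hence nonempty) vortex. The only difference is that the paper spells this out for just a pair of vortex cycles $\skCyc E \subseteq \skCyc E'$ on an Alexandroff nerve, whereas you handle an arbitrary finite collection and add an inductive variant for the pairwise-overlap reading, which is a harmless strengthening of the same idea.
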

\begin{proof}
Let $\skCyc E, \skCyc E'$ be filled vortex cycles on Alexandroff nerve $\Nrv E$.   $\skCyc E, \skCyc E'$ are nesting vortexes, since vortex $\skCyc E$ is in the interior of $\skCyc E'$.   We have
\[
\skCycNrv E = \left\{\skCyc E, \skCyc E': \skCyc E \cap \skCyc E' \neq \emptyset\right\},
\]
since each vortex cycle is filled and the portion of the plane occupied by $\skCyc E'$ includes $\skCyc E$.
Hence, $\skCycNrv E$ is an Edelsbrunner-Harer nerve complex.
\end{proof}

For each video frame, it is possible to extend outward from a barycentric Alexandroff vortex cycle to form multiple nesting filled vortex cycles, one inside the other.   In that case, the collection of vortex cycles form a large vortex nerve.

\begin{theorem}
A collection of nesting, overlapping filled vortex cycles on a triangulated video frame is an Edelsbrunner-Harer nerve complex.
\end{theorem}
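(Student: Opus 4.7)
The plan is to use Lemma~\ref{lemma:nestingVortexes} as the base case and extend by induction on the number of nesting, overlapping filled vortex cycles in the collection. Let $\mathcal{E} = \left\{\skCyc E_1, \skCyc E_2, \dots, \skCyc E_n\right\}$ be the collection drawn on the barycenters of an Alexandroff nerve $\Nrv E$ of a triangulated video frame, indexed so that $\skCyc E_i$ lies in the interior of $\skCyc E_{i+1}$ for each $i$. By the Edelsbrunner-Harer criterion quoted in the excerpt, it suffices to show that $\bigcap_{i=1}^{n} \skCyc E_i \neq \emptyset$.

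First I would record, as in Lemma~\ref{lemma:nestingVortexes}, that each $\skCyc E_i$ is a filled vortex cycle whose boundary is a simple closed curve containing a nonempty planar region in its interior, and that the nesting construction places $\skCyc E_i$ wholly inside the region bounded by $\skCyc E_{i+1}$. Consequently, the portion of the plane occupied by $\skCyc E_{i+1}$ includes $\skCyc E_i$, giving $\skCyc E_i \cap \skCyc E_{i+1} = \skCyc E_i$. The base case $n=2$ is exactly Lemma~\ref{lemma:nestingVortexes}.

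For the inductive step, assume $\bigcap_{i=1}^{k} \skCyc E_i \neq \emptyset$ and in fact equals the innermost cycle $\skCyc E_1$ (which is nonempty since it is a filled cycle containing barycenters of an Alexandroff nerve). Adjoining $\skCyc E_{k+1}$, the nesting hypothesis gives $\skCyc E_1 \subseteq \skCyc E_{k+1}$, so
\[
\bigcap_{i=1}^{k+1} \skCyc E_i \;=\; \skCyc E_1 \cap \skCyc E_{k+1} \;=\; \skCyc E_1 \;\neq\; \emptyset.
\]
Hence the full collection satisfies the nonempty-intersection condition and $\mathcal{E}$ is an Edelsbrunner-Harer nerve complex.

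The step I expect to carry the most weight is making the nesting relation precise enough to justify the set-theoretic containment $\skCyc E_i \subseteq \skCyc E_{i+1}$ — i.e., confirming that, by the barycentric construction extended outward from $\Nrv E$, every point of the filled inner cycle actually lies in the filled region bounded by the next outer cycle (rather than merely inside its bounding curve). Once this is nailed down via the Jordan-curve property of the simple closed boundary of each filled vortex, the remainder is a routine induction, and the theorem follows as a direct generalization of Lemma~\ref{lemma:nestingVortexes}.
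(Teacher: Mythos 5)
Your proposal is correct and follows the same route as the paper: the paper's own proof simply invokes Lemma~\ref{lemma:nestingVortexes} and the Edelsbrunner--Harer nonempty-intersection criterion, asserting that the nesting filled cycles have common nonempty intersection. Your explicit induction and the containment $\skCyc E_i \subseteq \skCyc E_{i+1}$ (so the full intersection equals the innermost filled cycle) is just a more carefully spelled-out version of that same argument.
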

\begin{proof}
Let $\skCycNrv E$ be a collection of nesting, overlapping filled vortex cycles on a video frame.   From Lemma~\ref{lemma:nestingVortexes}, we have
\[
\skCycNrv E = \left\{\skCyc E\in \skCycNrv E: \mathop{\bigcap} \skCyc E \neq \emptyset \neq \emptyset\right\}.
\]
Hence, $\skCycNrv E$ is a nerve complex.
\end{proof}

\begin{figure}
	\centering
	\subfigure[Frame 4-vortex nerve]{
	\includegraphics[width=0.8\linewidth]{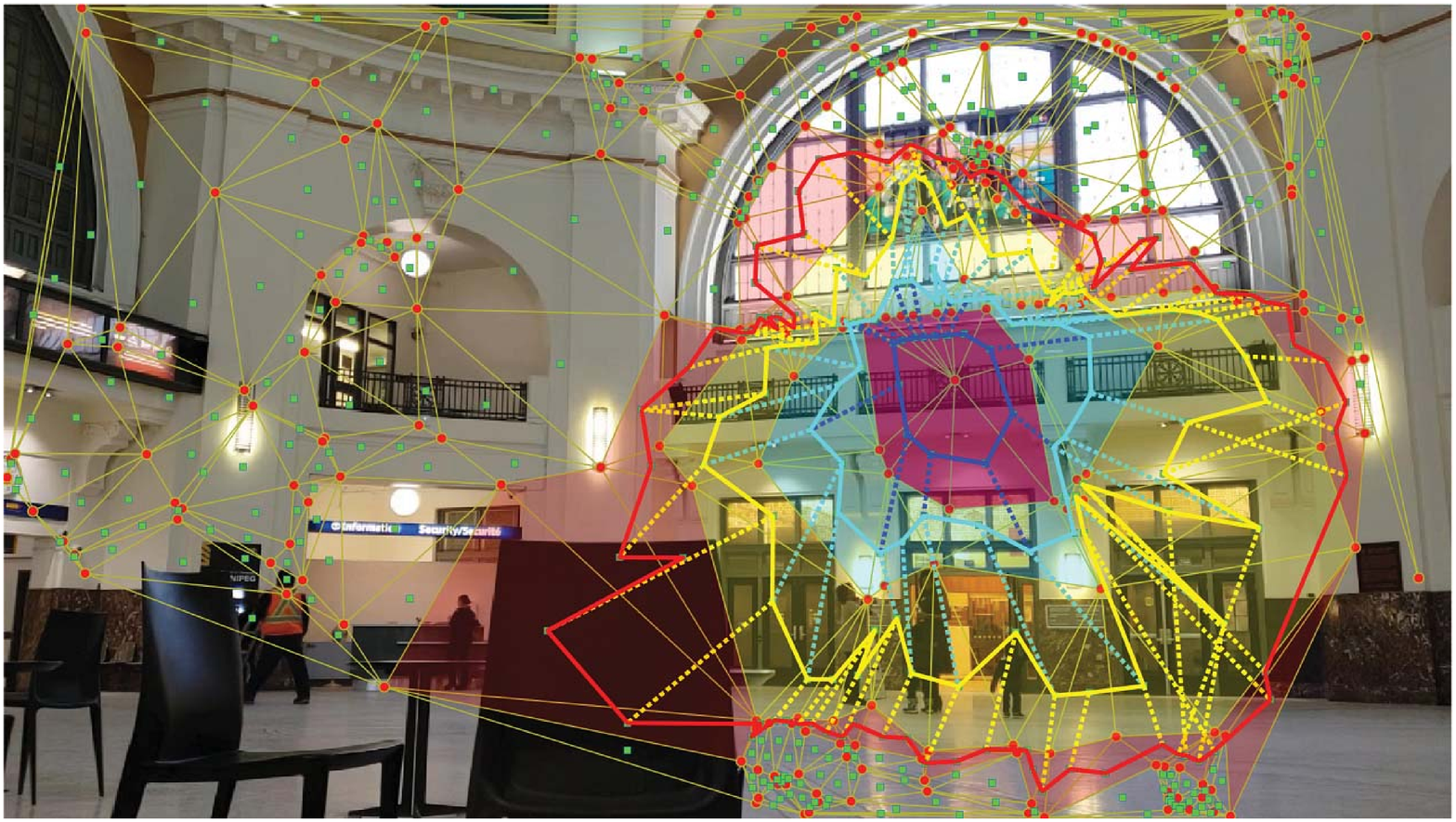}\label{fig:Dsample1_Moment1}}
	\subfigure[Frame 2-vortex nerve]{
	\includegraphics[width=0.8\linewidth]{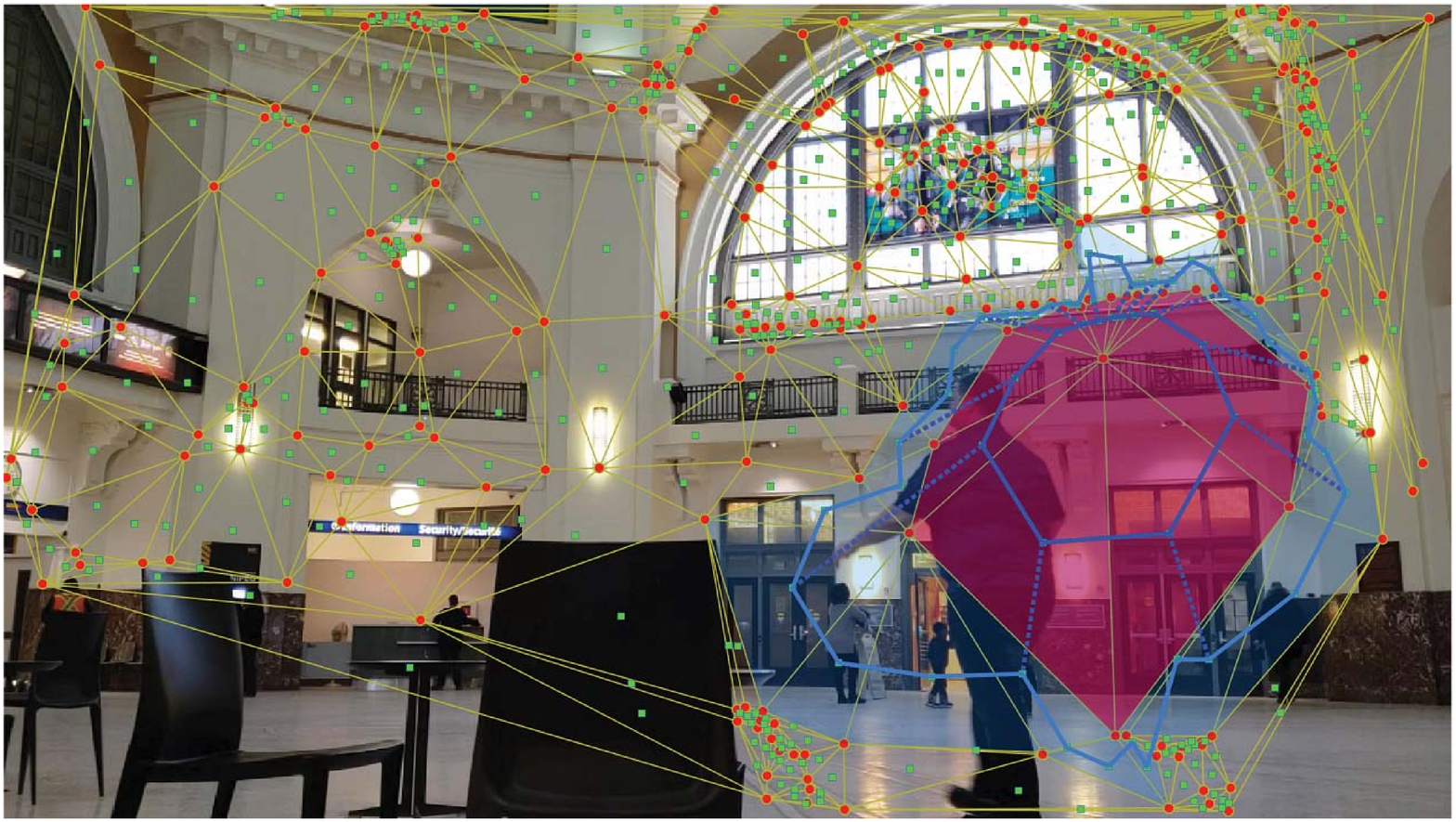}\label{fig:Dsample1_Moment2}}
	 
	\subfigure[Frame dual vortex nerves]{
	\includegraphics[width=0.8\linewidth]{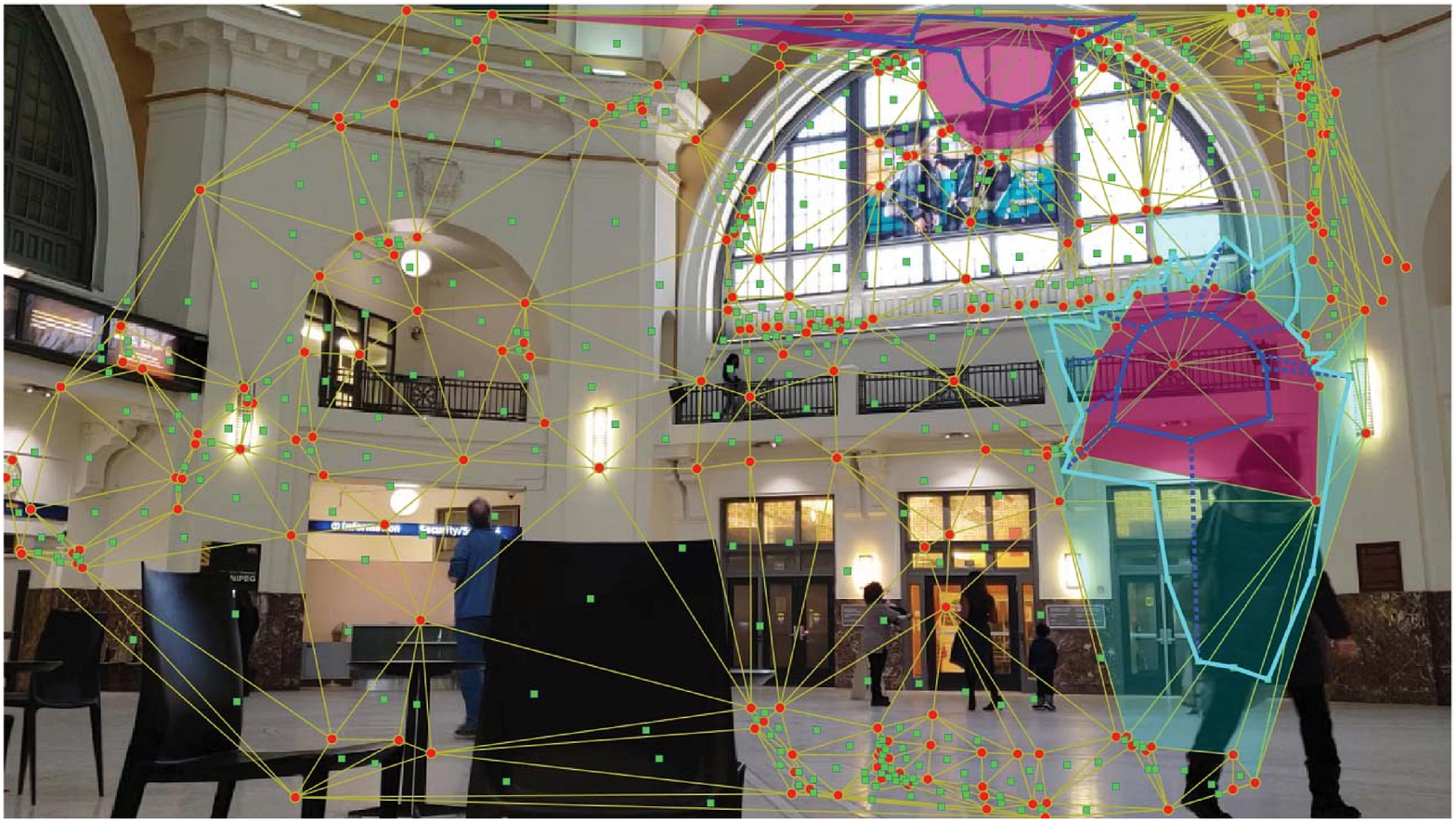}\label{fig:Dsample1_Moment3}}
	 
	\caption{Sample triangulated frames for video 1.}
	\label{fig:Dsample1_Moment}
\end{figure}  

\begin{example}{\bf Sample Video Fame Vortex Nerve Complexes}.\\
A trio of video frame vortex nerve complexes are shown in Fig.~\ref{fig:Dsample1_Moment}.   In Fig.~\ref{fig:Dsample1_Moment1}, for example, a collection of 4 nesting, overlapping vortexes are shown with
\begin{compactenum}[{\bf vortex}.1]
\item (innermost \colorbox{blue!50}{\bf vortex}) with blue edges with 12 vertices.   There is a cusp filament attached between the 12 vertices on the blue
vortex and
\item (\colorbox{blue!20}{\bf vortex}) with more than 20 vertices.   Again, there is a cusp filament attached between more than 20 vertices on the blue
vortex and
\item (\colorbox{yellow}{\bf vortex}) with more than 30 vertices.   Again, there is a cusp filament attached between more than 30 vertices on the blue
vortex and
\item (\colorbox{red!50}{\bf vortex}) with more than 30 vertices.
\end{compactenum}
The vortex nerve in Fig.~\ref{fig:Dsample1_Moment2} consists of a pair of overlapping vortexes with 12 cusp filments attached between the vortexes.   A pair of vortex nerves on the same triangulated video frame are shown in Fig.~\ref{fig:Dsample1_Moment3}.
\qquad \textcolor{blue}{\Squaresteel}
\end{example}

\subsection{Cyclic groups representing video frame vortex nerves.}\label{sec:generator}$\mbox{}$\\
Let $\sk E$ be a filled vortex ({\em i.e.}, also called a skeleton) on a triangulated video frame.
The vertices in vortex $\sk E$ are path-connected.   This means that there is path between every pair of vertices in $\sk E$.   In addition, each $\sk E$ is bi-directional in a vortex nerve.   So, for example, if vertices $p,q$ are on $\sk E$, a movement (traversal) from $p$ to $q$ is represented by $p + q$ and a reverse traversal is represented by $-q + p$.  

The $+$ between path edges reads \textcolor{blue}{\bf attach to}.   No movement is represented by $p + (-p) = 0p$.   In effect, every member $p$ in $\sk E$ has an inverse $-p$ and $0p$ represents an identity element in an algebraic group view of $\sk E$.  Notice that $+$ operation is Abelian.   To see this, do addition modulo 2 on the coefficients in a movement from $p$ to $q$, {\em e.g.},
\begin{align*}
p + q &\longmapsto 1 + 1\mbox{mod}2 = 0,\ \mbox{and}\\ 
q + p&\longmapsto 1 + 1\mbox{mod}2 = 0.
\end{align*}
For such a group, we write $(\sk E, +)$ (called a \emph{cyclic Abelian group}).    A \emph{generator} of skeleton $\sk E$ (denoted by $\left\langle a \right\rangle$) of such a group is a minimum length edge with a distinguished, starting vertex $a\in \sk E$.   Let $V$ be an ordered set of $k$ path-connected vertices in vortex $\sk E$, namely,
\begin{align*}
V &= \overbrace{\left\{v_0, v_1, \dots, v_i, \dots, v_{k-1}\right\}}^{\mbox{\textcolor{blue}{\bf $k$ ordered vertices}}}.\\
\left\langle a \right\rangle &= \overbrace{\norm{v_1 - v_0}.}^{\mbox{\textcolor{blue}{\bf generator $\left\langle a \right\rangle$ = minimum edge-length }}}\\
\end{align*}

\begin{example}{\bf Sample Generator of a Cyclic Group}.\\
Let generator $\left\langle a \right\rangle$ represented by vertex $0b$ in vortex $\sk B$ in Fig.~\ref{fig:bett2}, which is the starting vertex in a minimum length edge $\arc{0b1b}$.   Then, for example, vertex $p2\in \sk B$ can written as a multiple of edge $\arc{0b1b}$, {\em i.e.},
\begin{align*}
p2 &= \arc{0b1b} + \arc{1b2b} +\cdots+ \arc{8b9b}\\
     &\overbrace{\longmapsto 1b + 1b  +\cdots+ 1b}^{\mbox{\textcolor{blue}{\bf maps to a multiple of generator $\left\langle b \right\rangle$}}}\\
		 &= 9b. \ \mbox{\qquad \textcolor{blue}{\Squaresteel}}
\end{align*}
\end{example}

\begin{figure}[!ht]
\centering
\psscalebox{0.8 0.8} 
		{
\begin{pspicture}
(0,-3.8845036)(8.979828,3.8845036)
			\definecolor{colour1}{rgb}{1.0,0.0,0.2}
			\pspolygon[linecolor=black, 
			linewidth=0.04](3.142069,0.8229681)(3.142069,-0.9548096)(4.7420692,-1.6659207)(6.342069,-0.9548096)(6.342069,0.8229681)(4.7420692,1.5340793)(4.7420692,1.5340793)(4.7420692,1.5340793)
			\psline[linecolor=black, linewidth=0.04, arrowsize=0.05291667cm 
			2.0,arrowlength=1.4,arrowinset=0.0]{->}(3.2207608,1.1094712)(4.5018067,1.6641552)
			\psline[linecolor=black, linewidth=0.04, arrowsize=0.05291667cm 
			2.0,arrowlength=1.4,arrowinset=0.0]{->}(5.0169344,1.6518395)(6.251114,1.0897926)
			\psline[linecolor=black, linewidth=0.04, arrowsize=0.05291667cm 
			2.0,arrowlength=1.4,arrowinset=0.0]{->}(1.3812699,1.6993971)(0.98778266,0.28341147)
			\psline[linecolor=black, linewidth=0.04, arrowsize=0.05291667cm 
			2.0,arrowlength=1.4,arrowinset=0.0]{->}(7.9219904,2.0973988)(6.6736045,3.1297426)
			\psline[linecolor=black, linewidth=0.04, arrowsize=0.05291667cm 
			2.0,arrowlength=1.4,arrowinset=0.0]{->}(6.5935817,-0.7384424)(6.5908227,0.67950064)
			\pspolygon[linecolor=black, 
			linewidth=0.04](1.6417683,1.7152357)(2.8953054,3.100235)(4.727172,3.4563336)(6.3995953,3.0122097)(7.872156,1.7780894)(8.280826,-0.0064210207)(7.8409038,-1.6603006)(6.5873666,-3.0453)(4.7405105,-3.4606483)(3.1035101,-3.0465002)(1.6409427,-1.7728801)(1.2472628,0.07088011)(1.2370461,0.11549287)
			\psline[linecolor=black, 
			linewidth=0.04](6.3293004,0.82948816)(7.870764,1.7953418)
			\psline[linecolor=black, linewidth=0.04, linestyle=solid, 
			dash=0.17638889cm 0.10583334cm](6.3390565,-0.9266094)(7.8317394,-1.6485606)
			\psline[linecolor=black, linewidth=0.04, linestyle=solid, 
			dash=0.17638889cm 0.10583334cm](4.739056,-1.6485606)(4.7585683,-3.4436827)
			\psline[linecolor=black, linewidth=0.04, linestyle=solid, 
			dash=0.17638889cm 0.10583334cm](3.1293,-0.9363655)(1.6463734,-1.7461216)
			\psline[linecolor=black, linewidth=0.04, linestyle=solid, 
			dash=0.17638889cm 0.10583334cm](3.1488123,0.83924425)(1.6268611,1.717293)
			\psline[linecolor=black, linewidth=0.04, linestyle=solid, 
			dash=0.17638889cm 0.10583334cm](4.739056,1.5514394)(4.7293,3.4733906)
			\psdots[linecolor=black, fillstyle=solid,fillcolor=green, dotstyle=o, 
			dotsize=0.14, fillcolor=green](3.1519587,0.82245755)
			\rput[bl](3.1936872,0.39316317){5a}
			\pscircle[linecolor=colour1, linewidth=0.04, 
			dimen=outer](6.342069,-0.9383741){0.17919655}
			\psframe[linecolor=colour1, linewidth=0.04, 
			dimen=outer](1.8176789,1.8751123)(1.4865456,1.5439788)
			\psline[linecolor=black, linewidth=0.04, arrowsize=0.05291667cm 
			2.0,arrowlength=1.4,arrowinset=0.0]{->}(6.3768587,1.1533736)(7.511563,1.8543992)
			\psline[linecolor=black, linewidth=0.04, arrowsize=0.05291667cm 
			2.0,arrowlength=1.4,arrowinset=0.0]{->}(6.365893,3.2730086)(4.8736043,3.680962)
			\psline[linecolor=black, linewidth=0.04, arrowsize=0.05291667cm 
			2.0,arrowlength=1.4,arrowinset=0.0]{->}(4.4535317,3.6649516)(2.9239855,3.372807)
			\psline[linecolor=black, linewidth=0.04, arrowsize=0.05291667cm 
			2.0,arrowlength=1.4,arrowinset=0.0]{->}(2.5838044,3.1470175)(1.5434978,1.9923191)
			\rput[bl](0.8,1.8673382){10b}
			\rput[bl](0.7,0.011097169){9b}
			\rput[bl](5.979627,-0.8309401){2a}
			\rput[bl](6.987805,1.0){$\boldsymbol{\left\langle e_1\right\rangle}$}
      \rput[bl](6.5,-1.5){$\boldsymbol{\left\langle e_2\right\rangle}$}
			\rput[bl](4.15,-2.5){$\boldsymbol{\left\langle e_3\right\rangle}$}
			\rput[bl](2.25,-1.1){$\boldsymbol{\left\langle e_4\right\rangle}$}
			\rput[bl](2.25, 1.3){$\boldsymbol{\left\langle e_5\right\rangle}$}
			\rput[bl](4.85, 2.0){$\boldsymbol{\left\langle e_6\right\rangle}$}
				\rput[bl](4.55, 0.8){$\boldsymbol{\left\langle a\right\rangle}$}
				\rput[bl](4.25, 2.8){$\boldsymbol{\left\langle b\right\rangle}$}
			\psdots[linecolor=black, dotstyle=o, dotsize=0.14, 
			fillcolor=blue](7.8558645,1.777054)
			\psdots[linecolor=black, dotstyle=o, dotsize=0.14, 
			fillcolor=green](6.3403645,-0.9398613)
			\psdots[linecolor=black, dotstyle=o, dotsize=0.14, 
			fillcolor=green](4.7432632,-1.6558033)
			\psdots[linecolor=black, dotstyle=o, dotsize=0.14, 
			fillcolor=green](3.1403646,-0.9398613)
			\psdots[linecolor=black, dotstyle=o, dotsize=0.14, 
			fillcolor=green](6.331669,0.81955904)
			\psdots[linecolor=black, dotstyle=o, dotsize=0.14, 
			fillcolor=green](4.7345676,1.5181098)
			\psdots[linecolor=black, dotstyle=o, dotsize=0.14, 
			fillcolor=green](4.732607,3.451443)
			\psdots[linecolor=black, dotstyle=o, dotsize=0.14, 
			fillcolor=green](2.9149597,3.1043842)
			\psdots[linecolor=black, dotstyle=o, dotsize=0.14, 
			fillcolor=green](8.279666,0.004384249)
			\psdots[linecolor=black, dotstyle=o, dotsize=0.14, 
			fillcolor=green](7.8267245,-1.648557)
			\psdots[linecolor=black, dotstyle=o, dotsize=0.14, 
			fillcolor=green](6.5737834,-3.0309098)
			\psdots[linecolor=black, dotstyle=o, dotsize=0.14, 
			fillcolor=green](4.756136,-3.448557)
			\psdots[linecolor=black, dotstyle=o, dotsize=0.14, 
			fillcolor=green](1.6502538,-1.7544392)
			\psdots[linecolor=black, dotstyle=o, dotsize=0.14, 
			fillcolor=green](3.0914302,-3.0426745)
			\psdots[linecolor=black, dotstyle=o, dotsize=0.14, 
			fillcolor=green](6.387901,3.0052893)
			\psdots[linecolor=black, dotstyle=o, dotsize=0.14, 
			fillcolor=green](1.2443714,0.1102666)
			\psdots[linecolor=black, dotstyle=o, dotsize=0.14, 
			fillcolor=green](1.6502538,1.7043842)
			\pscircle[linecolor=colour1, linewidth=0.04, 
			dimen=outer](1.2479515,0.114567064){0.17919655}
			\psframe[linecolor=colour1, linewidth=0.04, 
			dimen=outer](3.3235612,0.99864167)(2.9924278,0.66750824)
			\rput[bl](4.5833573,1.1240096){0a}
			\rput[bl](5.242181,0.62400967){1a=0e}
			\rput[bl](4.559828,-1.5465785){3a}
			\rput[bl](3.2362983,-0.95834327){4a}
			\rput[bl](4.512769,3.7004802){0b}
			\rput[bl](6.359828,3.1416566){1b}
			\rput[bl](7.959828,1.7769508){2b=1e}
			\rput[bl](8.389239,-0.111284465){3b}
			\rput[bl](1.2186514,-2.0289316){8b}
			\rput[bl](2.7715926,-3.3877552){7b}
			\rput[bl](4.601004,-3.805402){6b}
			\rput[bl](6.5833573,-3.3524609){5b}
			\rput[bl](7.9068866,-1.8759904){4b}
			\rput[bl](2.2892396,3.2240098){11b}
			\end{pspicture}
			}
\caption[]{Sample Vortex Nerve with 6 filaments attached between vortexes}
\label{fig:bett8}
\end{figure}

\subsection{Cyclic groups for filament edges between vortexes.}$\mbox{}$\\
An edge attached between the inner and outer vortexes in a vortex nerve is called a filament, which has a cyclic group representation.   Let $e$ be a filament (denoted by $\fil e$) between a pair of vortexes.   A filament is bi-directional, {\em i.e.}, a filament can be traversed in either the forward (+e) or reverse direction (-e) relative to a starting vertex on the filament.   Hence, a filment is its own inverse and we write
\[
\overbrace{e + (-e) = e - e = 0.}^{\mbox{\textcolor{blue}{\bf no traversal of  filament $e$}}}
\]
Notice that traversal of $\fil e$ $k$ times is the same as traversing filament $e$ one time.   Hence,
\[
\overbrace{e +\cdots+ e = e + e = e.}^{\mbox{\textcolor{blue}{\bf filament $e$ is an identity element}}}
\]
Obviously, the traversal operation $+$ is Abelian, $e + e' =  e' + e$.   Consequently, a filament with the binary operation $+$ is an Abelian group, represented by $\left(\mbox{filament}\ e, +\right)$.    Every filament is its own generator.    A filament Abelian group is also written as $\left(\left\langle e\right\rangle, +\right)$.    

\begin{example}{\bf Multiple Filaments Attached Between Vortexes}\label{ex:filaments}.\\
Let a vortex nerve $\Nrv E$ contains 6 filaments attached between the vortexes $\left\langle a\right\rangle,\left\langle b\right\rangle$ is shown in Fig.~\ref{fig:bett8}, each with its own generator, namely,
\[
\overbrace{\left\langle e_1\right\rangle,\left\langle e_2\right\rangle,\dots,\left\langle e_6\right\rangle.}^{\mbox{\textcolor{blue}{\bf Six Filament Groups}}}
\]
Notice that each pair of vertices in $\Nrv E$ is path-connected.   This means that each member of the vortex nerve can be written as a linear combination of the generators.   For example, consider the pair of vertexes $2a, 0b$.  Then we have
\[
0b = \overbrace{2a + 0e + 1e + 1b + 0b.}^{\mbox{\textcolor{blue}{\bf Traverse edges starting at $2a$ and ending with $0b$}}}\mbox{\qquad \textcolor{blue}{\Squaresteel}}
\]
\end{example}

From Example~\ref{ex:filaments}, we have a way of representing a vortex nerve in terms of its generators.
That is, we can write
\[
\overbrace{\Nrv E = \left\{\left\langle a\right\rangle,\left\langle e_1\right\rangle,\left\langle e_2\right\rangle,\dots,\left\langle e_6\right\rangle, \left\langle b\right\rangle\right\}.}^{\mbox{\textcolor{blue}{\bf Vortex nerve is a collection of generators}}}
\]

 \begin{figure}[!ht]
	\centering
	 \includegraphics[width=115mm]{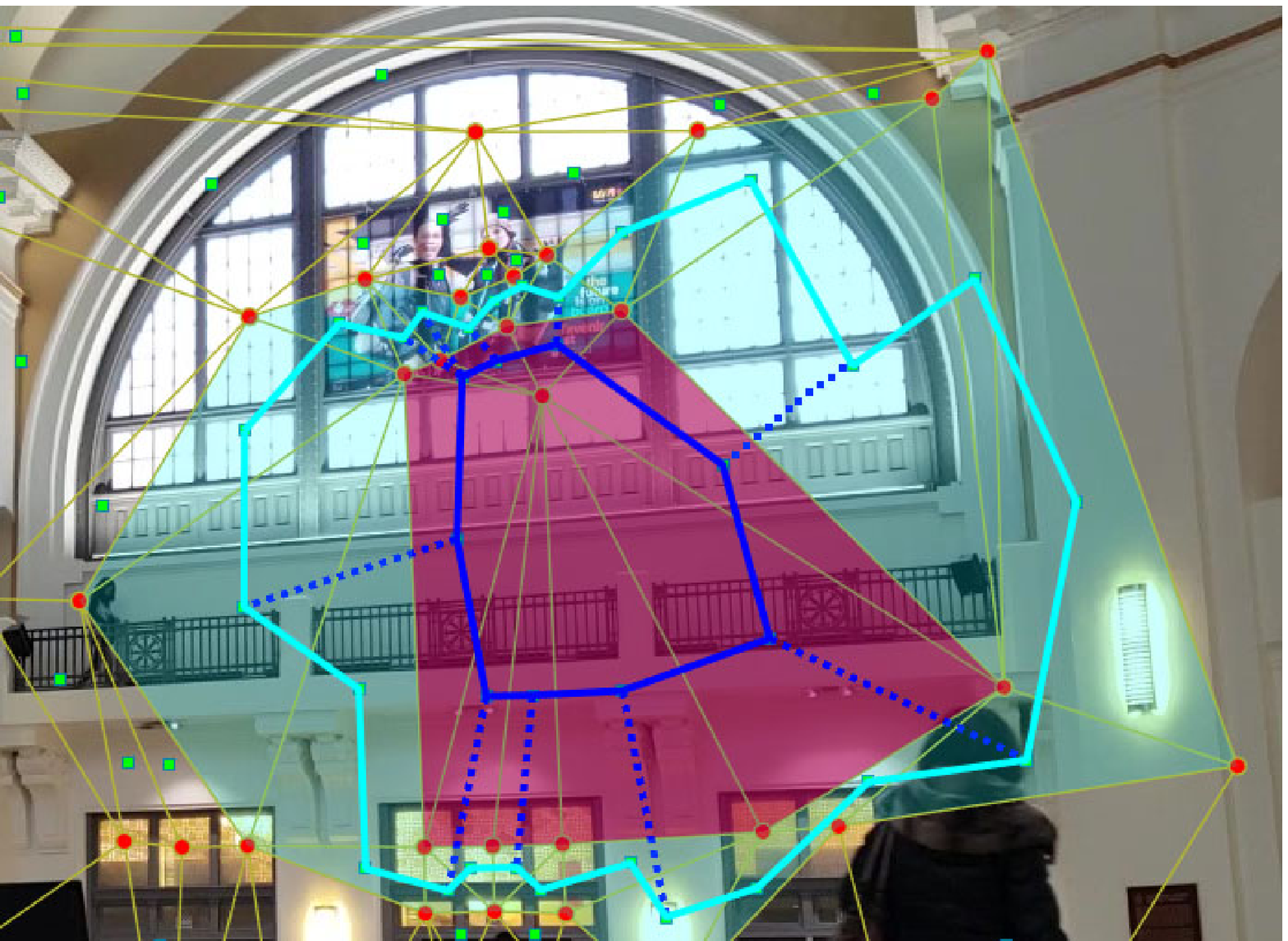}
	\caption{Barycentric vortex nerve on a triangulated video frame}
	\label{fig:vortexNerveFromArjuna}
\end{figure}

\subsection{Betti number for a vortex nerve on a triangulated video frame.}$\mbox{}$\\
Notice that a vortex nerve is a collection vortexes that are attached to each other.   This means that every pair of vertices in a vortex nerve is path-connected.   Also notice that each skeletal vortex in a vortex nerve is represented by a cyclic Abelian group with its own generator.   In effect, every vortex nerve has a free Abelian group representation.   A {\bf free abelian group} is an Abelian group with multiple generators, {\em i.e.}, every element of the group can be written as $\mathop{\sum}\limits_{i}g_ia$ for generators $\left\langle g_i \right\rangle$ in $G$.

A Betti number is a count of the number of generators (rank) in a free Abelian group~\cite[p. 151]{Goblin2010CUPhomology}.   This observation coupled with what we know about the cyclic Abelian group representation of each skeletal vortex and of each bi-directional sequence of edges attached between the vortexes, leads to the following result.

\begin{theorem}\label{thm:FundamentalOpticalVortexNerveTheorem}{\rm \cite[\S 4.13, p. 263]{Peters2020GTP}}.\\
Let $\mathcal{B}(\skCycNrv E)$ be the Betti number of $\skCycNrv E$.  A free Abelian group representation of $\skCycNrv E$ includes $k$ generators of cusp filament cyclic groups and two generators of the pair of the pair of cyclic groups representing the nesting, non-concentric nerve vortexes.   Hence, $\mathcal{B}(\skCycNrv E) = k + 2$.
\end{theorem}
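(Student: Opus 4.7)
The plan is to identify the generators of the free Abelian group representation of $\skCycNrv E$ in a systematic way, then count them, then verify no relation collapses the count below $k+2$.

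First I would assemble the ingredients already furnished in the preliminaries. By construction, a vortex nerve $\skCycNrv E$ in the hypothesis of the theorem consists of exactly two nesting, non-concentric filled vortex cycles $\skCyc E$ and $\skCyc E'$, together with $k$ cusp filaments $\fil e_1,\dots,\fil e_k$ attached between them. Each vortex cycle $\skCyc E$ (resp.\ $\skCyc E'$) was shown to be a cyclic Abelian group $(\skCyc E,+)$ with a single generator $\langle a\rangle$ (resp.\ $\langle b\rangle$) equal to a minimum-length directed edge starting at a distinguished barycentric vertex. Each filament $\fil e_i$ was shown to be a cyclic Abelian group $(\langle e_i\rangle,+)$ that is its own generator. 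These facts are exactly what Lemma~\ref{lemma:nestingVortexes}, the sample analysis in Example~\ref{ex:filaments}, and the discussion preceding the theorem supply.

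Next I would form the direct sum. Because every pair of vertices of $\skCycNrv E$ is path-connected (the two cycles overlap and each filament attaches to both), every element of $\skCycNrv E$ can be written as an integer linear combination of the edges, and then rewritten as an integer combination of the distinguished generators
\[
\langle a\rangle,\ \langle b\rangle,\ \langle e_1\rangle,\ \ldots,\ \langle e_k\rangle,
\]
exactly as illustrated for the vertex $0b$ in Example~\ref{ex:filaments}. This shows that these $k+2$ elements generate $\skCycNrv E$ as an Abelian group. Appealing to the definition of Betti number as the rank of a free Abelian group (the cited characterization from Giblin), the Betti number $\mathcal{B}(\skCycNrv E)$ is at most $k+2$.

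The main obstacle, and the step I would spend most care on, is the lower bound: showing that the generating set is \emph{free}, i.e.\ that there is no nontrivial $\mathbb{Z}$-relation among $\langle a\rangle,\langle b\rangle,\langle e_1\rangle,\ldots,\langle e_k\rangle$. Here I would argue that the two cycle generators $\langle a\rangle,\langle b\rangle$ lie in disjoint cyclic summands because $\skCyc E$ and $\skCyc E'$ are non-concentric and each cycle closes only upon itself (so a multiple of $\langle a\rangle$ can never coincide with a multiple of $\langle b\rangle$), while each filament generator $\langle e_i\rangle$ is the unique edge crossing between the two cycles at its attachment vertices and therefore cannot be expressed using only cycle edges or other filament edges. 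Consequently the generating set is independent, the free Abelian rank is exactly $k+2$, and we conclude $\mathcal{B}(\skCycNrv E)=k+2$.
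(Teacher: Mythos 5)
The paper never actually proves this theorem: it is imported by citation from \cite{Peters2020GTP}, and the only supporting argument in the text is the bookkeeping in the preceding subsections --- each vortex cycle carries one generator ($\langle a\rangle$, $\langle b\rangle$), each of the $k$ cusp filaments is its own generator $\langle e_i\rangle$, and the Betti number is taken to be the count of generators in the free Abelian group representation. Your first two paragraphs reproduce exactly that count (and the path-connectedness remark from Example~\ref{ex:filaments}), so on the upper-bound side you are doing precisely what the paper intends.

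The place where a genuine gap opens is the step you yourself identify as the crux: the claim that $\langle a\rangle,\langle b\rangle,\langle e_1\rangle,\dots,\langle e_k\rangle$ are $\mathbb{Z}$-independent. As written this is assertion, not argument: you never fix the ambient Abelian group in which the relations are to be tested, and statements such as ``a multiple of $\langle a\rangle$ can never coincide with a multiple of $\langle b\rangle$'' or ``a filament cannot be expressed using cycle edges'' restate the desired conclusion. Moreover, the paper's own conventions block a literal free-rank argument: a filament is declared to satisfy $e+e=e$, i.e.\ it is idempotent, and in an honest Abelian group that forces $e=0$, so $\langle e_i\rangle$ cannot be an infinite cyclic summand; and if one instead computes the ordinary first Betti number of the underlying $1$-complex (two cycles joined by $k$ arcs), Euler-characteristic counting gives $E-V+1=k+1$, not $k+2$. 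So the value $k+2$ is not recoverable from a standard freeness/rank computation --- it holds because the paper, by definition, counts one generator per vortex and one per filament. Your count therefore matches the paper's (unproved) argument, but the independence paragraph should either be dropped in favour of the definitional count, or replaced by an explicit construction of the free Abelian group on these $k+2$ symbols, acknowledging that this is a modelling choice rather than a theorem about the edge group of the complex.
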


\begin{example}
A sample optical vortex nerve  on the triangles of an Alexandroff nerve on a triangulated video frame, is shown in Fig.~\ref{fig:vortexNerveFromArjuna}.   Briefly, notice that there is a pair of nesting, non-concentric vortexes
with 9 attached between inner and outer vortex vertexes, each with its own generator.   Hence, from Theorem~\ref{thm:FundamentalOpticalVortexNerveTheorem}, the Betti number equals 2 + 10 = 12 for this sample nerve.
\qquad \textcolor{blue}{\Squaresteel}
\end{example}

\subsection{Maximal Nerve Complexes (MNCs)}
Of particular interest among all of the possible Alexandroff nerves on a triangulated video frame are those nerves that have a maximal number of triangles attached to a particular vertex.    In a triangulation of frame centroids, a \emph{maximal nerve complex} (MNC) has the highest number of centroids surrounding the common centroid at its center.   Each of the MNC vertexes is a centroid of an image dark region (hole).   
For this reason, an MNC has the highest number of dark regions (holes) in the triangulation of video frame centroids.   Also, the barycenters of the triangles on an MNC are between image holes, since each barycenter is in that part of a triangle between the centroids on frame dark regions.   Hence, connected barycenters model paths for light from either reflected or refracted light from visual scene surface shapes recorded in a video frame.   That is, the edge between a pair of barycenters stretches across a visual scene surface where there is reflected or refracted light.   Consequently, with a vortex nerve on an MNC, we will find the highest concentration of contrasting light and dark regions in an image.   It is well-known that a concentration of surface holes defines a surface shape.   In other words, a surface shape represented by an MNC will have the highest definition, {\em i.e.}, highest concentration of holes pinpointed by their centroids.

\begin{example}{\bf Sample Video Frame MNC}.\\
A sample video frame MNC is shown the majenta-coloured region in Fig.~\ref{fig:vortexNerveFromArjuna}.
\qquad \textcolor{blue}{\Squaresteel}
\end{example}

\begin{algorithm}[!ht]
\caption{\bf Video Barcode Construction Method}\label{algo:barcode}

	\SetKwData{matCentroids}{matCentroids}\SetKwData{triDelaunay}{triDelaunay}
	\SetKwData{mncNode}{mncNode}\SetKwData{BW}{BW}\SetKwData{matBarycenters}{matBarycenters}\SetKwData{frame}{frame}\SetKwData{bettyNo}{bettyNo}\SetKwData{kvortex}{kvortex}
	\SetKwData{mncBarycenters}{mncBarycenters}\SetKwData{count}{count}\SetKwData{i}{i}\SetKwData{j}{j}
	\SetKwData{spkComplex}{spkComplex}\SetKwData{filaments}{filaments}
	\SetKwFunction{plot}{plot}\SetKwFunction{FindCompress}{FindCompress}
	\SetKwFunction{length}{length}\SetKwFunction{polygon}{polygon}
	\SetKwFunction{skelSeedPoints}{skelSeedPoints}
	\SetKwInOut{Input}{input}\SetKwInOut{Output}{output}
	\Input{A video $Im$ of size $w\times l$ and $f$ frames}
	\Output{A triangulated video $w\times l$ and $f$ frames}
	\BlankLine
	
	\While{$count < f$}{
		\count $\leftarrow$ \emph{\count + 1}\;
		\emph{Read the video and get \frame{count}.}\;
		\matCentroids $\leftarrow$ \skelSeedPoints{\frame{count}} \;
		\triDelaunay $\leftarrow$ \emph{Perform Delaunay triangulation on 
		\matCentroids.}\;
		\plot{\triDelaunay}\;
		\mncNode  $\leftarrow$ \emph{Calculate the most common node in \triDelaunay}\;
		\matBarycenters  $\leftarrow$ \emph{Calculate barycenters of triangles in 
		\triDelaunay}\;
		\mncBarycenters $\leftarrow$ \emph{Select barycenters surrounding \mncNode from 
		\matBarycenters}\;
		\plot{\matBarycenters}\;
		\bettyNo (count)(j) =1\;
			\While{$i < \length{mncNode}$}{
			\kvortex(i)(1) $\leftarrow$ \mncBarycenters\;
			\j $\leftarrow$ \emph{1}\;
			\While{$\polygon{\spkComplex(i)(j)} encapsulate		
			\spkComplex(i)(j-1)$}{
				\kvortex(i)(j) $\leftarrow$ \emph{Calculate immediate neighboring 
				triangles}\;
				\spkComplex(i)(j) $\leftarrow$ \emph{Select barycenters of 
				\kvortex(i)(j) from \matBarycenters}\;
					\plot{\polygon{\spkComplex(i)(j)}}\;
					\filaments  $\leftarrow$ \emph{Connect each vertices of 
					\spkComplex(i)(j-1) to vertices in \spkComplex(i)(j) }\;
					\plot{\filaments}\;
					\bettyNo(count)(j) = \bettyNo(count)(j) + 
					\length{\spkComplex(i)(j-1)}+1\;
			}
		}
		\emph{Write \frame}\;
	}
	\plot{\bettyNo}\;
\end{algorithm}

\begin{figure}
	\centering
	\subfigure[Barcode for a triangulated video]{
		\includegraphics[width=\linewidth]{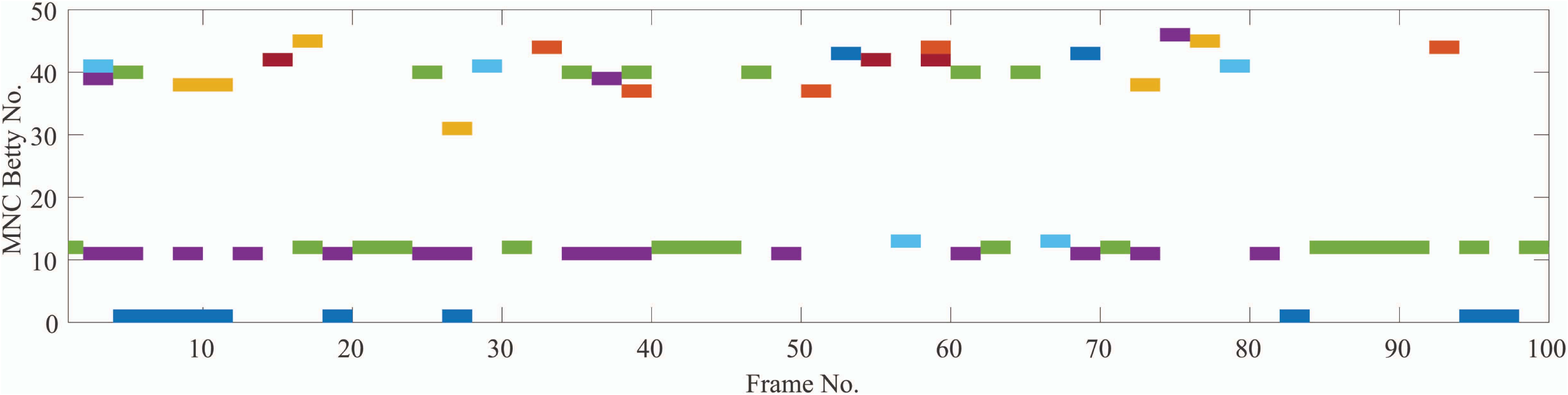}\label{fig:sample2barcode100}} 
	\subfigure[Barcode for a 2nd triangulated video]{
		\includegraphics[width=\linewidth]{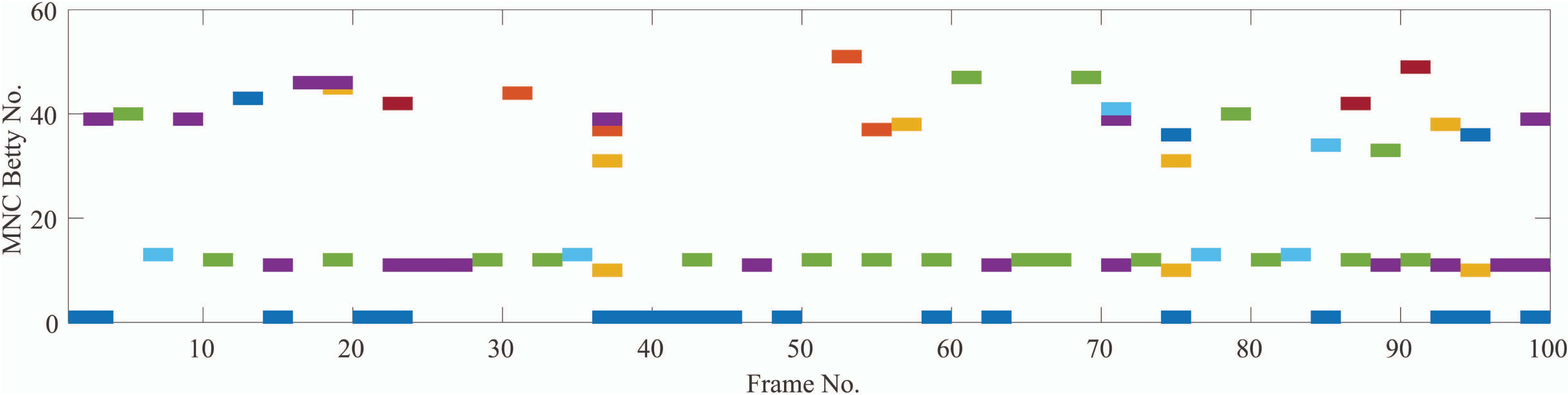} \label{fig:sample3barcode100}} 
	\caption{Sample Video Ghrist Barcodes for Two Videos}
	\label{fig:samples}
\end{figure}

\section{Betti Number-Based Video Barcode}
A Betti number-based video \emph{barcode} is a pictograph that records one or more occurrences of Betti numbers derived from vortex nerves across sequences of triangulated frames.   In our case, a frame Betti number tells us the number of generators in the frame vortex nerve.   A repetition of the same Betti number across a sequence of consecutive frames tells us that there is a similar shape outlined by a vortex nerve that recurs on the frames.

In the video barcode introduced in the paper, there is a 1-to-1 correspondence between a frame number and a Betti number.   There can be more than one vortex nerve on a frame.   Hence, a frame with more than one Betti number will result in the more than one bar in the same column of the video barcode.   The steps to construct a video barcode are given in Alg.~\ref{algo:barcode}.

\begin{example}{\bf Sample Video Barcodes}.\\
Sample video barcodes are given in Fig.~\ref{fig:sample2barcode100} and  Fig.~\ref{fig:sample3barcode100}.   A persistent video frame Betti number is represented by a row of contiguous bars.   
\qquad \textcolor{blue}{\Squaresteel}
\end{example}

The gaps between the sequences of contiguous bars are important.   Since each column in a video barcode corresponds to a video frame, a barcode row containing a sequence of contiguous bars corresponds to sequence of video frames that we can identify.   As a result, each row containing persistent Betti numbers leads to the production of a new video in which only frames containing vortex nerves with similar shapes appear in the video.  Each row video containing a persistent surface shape give us a closer look at the minute changes in surfaces covered by a vortex nerve with the same Betti number.    This leads to the following provable observations.

\begin{observation}
A Betti number-based video barcode row with no gaps indicates the presence of a vortex nerve with approximately the same shape in each video frame.
\qquad \textcolor{blue}{\Squaresteel}
\end{observation}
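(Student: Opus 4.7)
The plan is to reduce the statement to a direct application of Theorem~\ref{thm:FundamentalOpticalVortexNerveTheorem} together with the column-to-frame correspondence built into the barcode construction (Alg.~\ref{algo:barcode}). First, I would fix notation: suppose the barcode row sits at height $\mathcal{B}(\skCycNrv E) = m$ and occupies the contiguous column interval $[i,j]$ with no gap, so that for each frame index $t\in\{i,i+1,\dots,j\}$ the triangulated frame $fr[t]$ carries a vortex nerve $\skCycNrv E_t$ whose Betti number satisfies $\mathcal{B}(\skCycNrv E_t) = m$.

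Next, I would invoke Theorem~\ref{thm:FundamentalOpticalVortexNerveTheorem}, which tells us that a vortex nerve with Betti number $m$ admits a free Abelian group representation with exactly $m$ generators, of which two correspond to the inner and outer vortex cycles $\left\langle a\right\rangle,\left\langle b\right\rangle$ and the remaining $k = m-2$ correspond to the cusp filament cyclic groups $\left\langle e_1\right\rangle,\dots,\left\langle e_k\right\rangle$. Applying this to each $t\in[i,j]$ yields the decomposition
\[
\skCycNrv E_t = \bigl\{\left\langle a_t\right\rangle,\left\langle e_{1,t}\right\rangle,\dots,\left\langle e_{k,t}\right\rangle,\left\langle b_t\right\rangle\bigr\},
\]
so every nerve in the row range has the same combinatorial skeleton: a pair of nesting, non-concentric vortex cycles joined by exactly $k$ cusp filaments. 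Since the shape of a vortex nerve on a triangulated frame is, in this paper, identified with this generator-based free Abelian structure (the pair of vortex generators together with the $k$ filament generators), the frames in the row range carry vortex nerves of the same combinatorial type.

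The final step is to translate combinatorial sameness into approximate geometric sameness of the underlying surface shape. Here I would appeal to the pipeline in Fig.~\ref{fig:nerveBettiNumber} and to the MNC discussion: the barycenters forming each $\skCycNrv E_t$ are pinned to the centroids of image holes through the Delaunay construction of Alg.~\ref{alg:DelaunayTriangleConstruction}, so a fixed count $k+2$ of generators forces the same number of hole-clusters to be surrounded by the same two nesting cycles in consecutive frames. Up to small perturbations of centroid positions between consecutive frames of a video, the vortex nerve therefore outlines approximately the same surface shape across $fr[i],\dots,fr[j]$, which is precisely the claim.

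The main obstacle I anticipate is the word \emph{approximately}: the theorem guarantees strict equality of the combinatorial/algebraic invariant $\mathcal{B}$, but the visual shape depends on the actual barycenter coordinates, which can drift between consecutive frames even when $\mathcal{B}$ is constant. I would handle this by making explicit that \emph{shape} in this paper is defined at the level of the free Abelian generator data of the vortex nerve, so that equality of this data is what ``approximately the same shape'' formally abbreviates; any residual geometric drift is absorbed into the qualifier \emph{approximately}. No stronger metric comparison is needed for the observation as stated.
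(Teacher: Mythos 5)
Your proposal matches the paper's own (informal) justification: the paper treats this as a ``provable observation'' supported by the preceding discussion, namely that each barcode column corresponds to a frame, a contiguous run of bars corresponds to consecutive frames whose vortex nerves have the same Betti number, and by Theorem~\ref{thm:FundamentalOpticalVortexNerveTheorem} that number fixes the generator structure (two nesting vortex cycles plus $k$ cusp filaments), so the nerves outline a recurring, hence approximately identical, shape. Your handling of ``approximately'' --- reading shape sameness at the level of the free Abelian generator data and letting the qualifier absorb geometric drift between consecutive frames --- is exactly the sense in which the paper intends the claim, so the argument is correct and essentially the same as the paper's.
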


\begin{observation}
A Betti number-based video barcode row with large gaps between occurrences of a Betti number indicates the presence of dissimilar vortex nerve with dissimilar shapes in large number of video frames.
\qquad \textcolor{blue}{\Squaresteel}
\end{observation}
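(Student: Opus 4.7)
The plan is to read the statement through the lens of Theorem~\ref{thm:FundamentalOpticalVortexNerveTheorem}, which tells us that the Betti number of a vortex nerve $\skCycNrv E$ is a structural invariant equal to $k+2$, where $k$ counts the cusp filaments attached between the inner and outer vortex cycles. Consequently, two frames carry vortex nerves with the same Betti number if and only if their nerves have the same filament count $k$, and differing Betti numbers force differing $k$. I would use this invariant as the working notion of ``shape similarity'' for vortex nerves; the observation then reduces to an assertion about the distribution of filament counts across frames indexed by barcode columns.

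First, I would formalize the barcode correspondence given in Alg.~\ref{algo:barcode}: each column of the barcode is in $1$-to-$1$ correspondence with a frame $fr[i]$, and each row corresponds to a fixed Betti number value $B$. A mark at position $(i,B)$ signals the existence of a vortex nerve $\skCycNrv E_i$ on $fr[i]$ with $\mathcal{B}(\skCycNrv E_i)=B$. A gap in row $B$ spanning columns $i+1,\dots,j-1$ is therefore, by definition, a maximal run of frames on which no extracted vortex nerve achieves Betti number $B$.

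Second, I would invoke Theorem~\ref{thm:FundamentalOpticalVortexNerveTheorem} to translate this into a geometric statement. For every frame $fr[\ell]$ with $\ell\in\{i+1,\dots,j-1\}$, the extracted vortex nerve $\skCycNrv E_\ell$ satisfies $\mathcal{B}(\skCycNrv E_\ell)\neq B$, hence its filament count $k_\ell=\mathcal{B}(\skCycNrv E_\ell)-2$ differs from $k=B-2$. Since the free Abelian group presentation $\skCycNrv E = \{\langle a\rangle,\langle e_1\rangle,\dots,\langle e_k\rangle,\langle b\rangle\}$ is determined up to isomorphism by $k$, differing $k_\ell$ yields non-isomorphic generator sets, and thus structurally dissimilar vortex-nerve shapes on those frames. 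If the gap is large, this dissimilarity is witnessed on a large consecutive block of frames, which is exactly what the observation asserts.

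The main obstacle I expect is not the logical deduction itself but the precisification of ``dissimilar shapes.'' The paper implicitly equates vortex-nerve shape with the rank of its free Abelian group representation, yet two nerves with equal Betti number could still be geometrically distinct, and two with unequal Betti numbers could be visually close. I would address this by explicitly adopting the Betti-number invariant as the shape-equivalence relation used in the statement, citing the definition of persistent shape detection embedded in the steps of Fig.~\ref{fig:nerveBettiNumber} and in Alg.~\ref{algo:barcode}. Under this convention the observation becomes a clean corollary of Theorem~\ref{thm:FundamentalOpticalVortexNerveTheorem} combined with the column-to-frame bijection built into the barcode construction, and no further geometric arguments are needed.
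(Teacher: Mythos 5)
The paper offers no formal proof of this observation---it is asserted as ``provable'' on the strength of the surrounding discussion (the column-to-frame correspondence of the barcode and the convention, stated just before, that a recurring Betti number signals a recurring vortex-nerve shape)---and your argument reconstructs exactly that reasoning, using Theorem~\ref{thm:FundamentalOpticalVortexNerveTheorem} to equate Betti-number differences with filament-count differences and explicitly adopting the Betti number as the shape-equivalence invariant. So your proposal is correct and follows essentially the same route the paper intends, with the caveat you already note (that ``dissimilar shape'' is only captured up to this invariant) being a limitation of the paper's own statement rather than of your argument.
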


A pair of vortex nerves $\skCycNrv E, \skCycNrv E'$ are descriptively close, provided the vortex nerves have the same description, {\em i.e.}, a feature vector $\Phi(\skCycNrv E)$ that describes $\skCycNrv E$ will match the feature vector $\Phi(\skCycNrv E')$.    This is leads to the following observation.

\begin{observation}
A Betti number-based video barcode row with a persistent Betti number across the video frames implies the presence of descriptively close vortex nerves on the frames.
\qquad \textcolor{blue}{\Squaresteel}
\end{observation}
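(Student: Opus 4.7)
The plan is to treat this observation as a direct consequence of Theorem~\ref{thm:FundamentalOpticalVortexNerveTheorem} together with the definition of descriptive closeness given immediately before the observation. First, I would unpack the hypothesis: a persistent Betti number across a barcode row means that there is a constant integer $B$ such that, for every frame index $i$ in the row, the triangulated frame $fr[i]$ carries a vortex nerve $\skCycNrv E_i$ with $\mathcal{B}(\skCycNrv E_i) = B$. This is exactly what a contiguous (gap-free) sequence of bars at the same row in the video Ghrist barcode records, by the construction in Algorithm~\ref{algo:barcode}.

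Next, I would apply Theorem~\ref{thm:FundamentalOpticalVortexNerveTheorem} to each frame of the row. The theorem forces every $\skCycNrv E_i$ to decompose as a pair of nesting, non-concentric vortex cycle generators together with exactly $k = B - 2$ cusp filament generators. Hence all $\skCycNrv E_i$ in the row share the same free Abelian group rank, the same count of bounding vortex cycles, and the same number of filament generators bridging them. Structurally, this is the strongest invariant the Betti number supplies.

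Finally, I would connect this structural agreement to descriptive closeness. Let $\Phi$ be a feature vector built from (or refining) the rank data preserved above, i.e. a description that factors through the Betti number and the $(2, k)$ split into vortex-cycle and filament generators. Then $\Phi(\skCycNrv E_i) = \Phi(\skCycNrv E_j)$ for every pair of frames $i, j$ in the row, which is the defining condition for $\skCycNrv E_i$ and $\skCycNrv E_j$ to be descriptively close. Iterating across the row delivers the observation.

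The step I expect to carry the most weight is the choice of $\Phi$. The observation is only as strong as the description one is willing to read off from the Betti number, and a description that encodes additional geometric attributes (edge lengths, barycenter coordinates, photometric values at centroids) need not be preserved along a barcode row. The honest way to avoid overclaiming is to phrase the conclusion conditionally: for every feature vector $\Phi$ that is a function of the free Abelian group rank of $\skCycNrv E$, persistence of the Betti number along a barcode row implies descriptive closeness of the associated vortex nerves. This keeps the proof rigorous and squares it with the earlier definition of descriptive nearness without smuggling in geometric hypotheses that the Ghrist barcode does not track.
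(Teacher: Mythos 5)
Your argument is sound and matches the paper's evident intent, but you should know that the paper itself offers no proof of this statement: all three observations are simply asserted as ``provable observations'' following the one-sentence definition of descriptive closeness via a feature vector $\Phi$. Your route --- persistence of the Betti number along a gap-free row, Theorem~\ref{thm:FundamentalOpticalVortexNerveTheorem} forcing each $\skCycNrv E_i$ to carry the same $(2,k)$ split into vortex-cycle and filament generators, and then equality of any $\Phi$ that factors through this rank data --- is the natural formalization of what the authors leave implicit. The caveat you flag is the real content of your contribution: since the paper's definition of descriptive closeness quantifies over an unspecified feature vector, the unqualified implication fails for any $\Phi$ encoding geometric or photometric attributes (edge lengths, barycenter positions, hole intensities) that a barcode row does not track, so your conditional restatement (descriptive closeness with respect to every $\Phi$ determined by the free Abelian group rank of $\skCycNrv E$) is not merely cautious phrasing but the correction needed to make the observation literally true; the paper's version tacitly assumes such a $\Phi$.
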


\section{Time complexity Analysis}
Fig.\ref*{fig:timeComplexity} shows the results of the time complexity analysis for the 
algorithm. In order to calculate the theoretical time complexity several assumptions 
were made. The time taken for built-in functions were not considered; for example time 
taken to import the video frame, save the triangulated frames, initializing variables, 
inner working of loops etc. Addition, Subtraction, Multiplication, Division were taken 
as 4 different calculations.Furthermore, allocation of values and array search function 
were considered to be one calculation.

The theoretically obtained time complexity in terms of big O notation was $mn^2$ where 
$m$ is the number of  MNCs and $n$ is the number of centroids. 

In order to obtain the actual time complexity plot, randomly generated points were 
used. This ensures that the generated points are not based on a particular image and 
will give more generic results. To plot the two graphs in the same plot, a scaling 
factor $k$ was calculated. So the final theoretical graph shown in 
Fig.\ref{fig:timeComplexity} is in the form of $kmn^2$. The value of $k$ was 
experimentally found to be $1.0526e^{-4} s$. 

The theoretical graph is shown in green dotted line. The actual time complexity is 
shown using the blue solid line. The number of  MNCs that were generated are shown by 
the gray stem plot. The red dotted line shows the $kn^2$ graph where the MNC number was 
not considered. It is evident from the plots that both number of centroids and the  
number of MNCs affects the time complexity of the algorithm. Furthermore, the $kmn^2$ 
graph follows the real time complexity graph very closely. 

\begin{figure}[!ht]
	\centering
	\includegraphics[width=\linewidth]{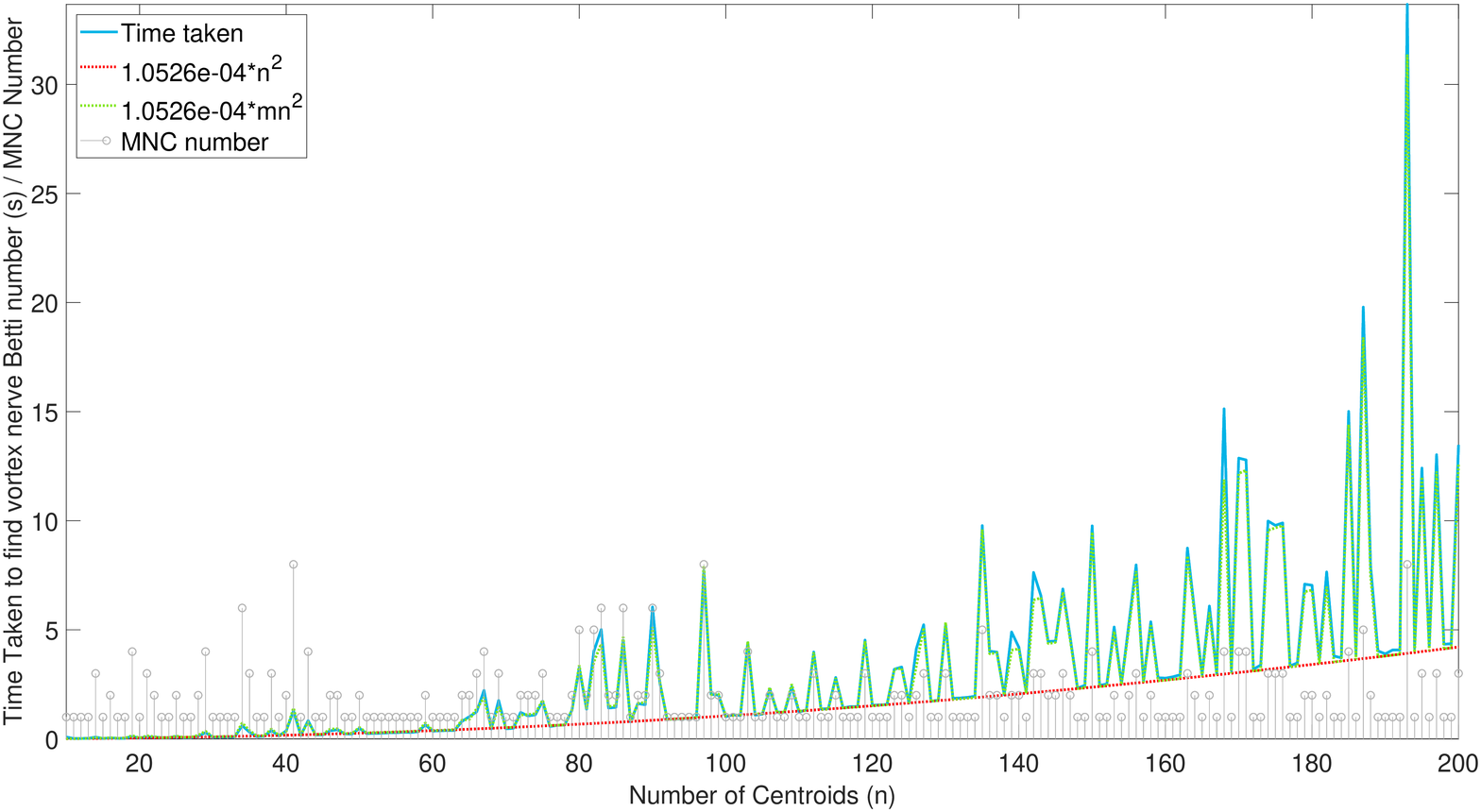}
	\caption{Time complexity.} 
	\label{fig:timeComplexity}
\end{figure}

\section{Conclusion}
A video, Betti nuumber-based form of Ghrist barcode has been introduced in this paper. 
This form of Ghrist barcode is useful in tracking the persistence of surface shapes recorded
sequences of video frames.     

\bibliographystyle{amsplain}
\bibliography{NSrefs}

\end{document}